\newtheorem{thm}{Theorem}[section]
\newtheorem{lemma}[thm]{Lemma}
\newtheorem{cor}[thm]{Corollary}
\theoremstyle{definition}
\newtheorem{definition}[thm]{Definition}
\newtheorem{ex}[thm]{Example}
\theoremstyle{remark}
\newtheorem{remark}[thm]{Remark}
\newtheorem{remarks}[thm]{Remarks}
\numberwithin{equation}{section}
\newenvironment{mylist}{\begin{enumerate}

}{\end{enumerate}}
\newcommand\al{\alpha}
\newcommand\be{\beta}
\newcommand\ga{\gamma}
\newcommand\Ga{\Gamma}
\newcommand\de{\delta}
\newcommand\De{\Delta}
\newcommand\ep{\epsilon}
\newcommand\la{\lambda}
\newcommand\om{\omega}
\newcommand\si{\sigma}
\renewcommand\th{\theta}
\newcommand\bal{\boldsymbol{\alpha}}
\newcommand\bbe{\boldsymbol{\beta}}
\newcommand\bfeta{\boldsymbol{\eta}}
\newcommand\bzero{{\boldsymbol{0}}}
\newcommand\tk{\widetilde k}
\newcommand\tu{\widetilde u}
\newcommand\ts{\widetilde s}
\newcommand\tw{\widetilde w}
\newcommand\tla{\widetilde \la}
\newcommand\tth{\widetilde \th}
\newcommand\tI{\widetilde I}
\newcommand\tX{\widetilde X}
\newcommand\tY{\widetilde Y}
\newcommand\tDe{\widetilde \De}
\newcommand\R{\mathbb{R}}
\newcommand\B{{\mathcal B}}
\newcommand\X{\times}
\newcommand{\pa}{\partial}
\let\le=\leqslant
\let\ge= \geqslant
\renewcommand\emptyset{\mbox{\Large \o}}
\newcommand\Cs{{C^1_{\rm s}}}
\begin{document}

\title[Multi-point problems with
Sturm-Liouville-type boundary conditions]
{Linear, second-order problems with
Sturm-Liouville-type multi-point boundary conditions}
\author{Bryan  P.  Rynne}
\address{Department of Mathematics and the Maxwell Institute for
Mathematical Sciences, Heriot-Watt University,
Edinburgh EH14 4AS, Scotland.}
\email{bryan@ma.hw.ac.uk}

\begin{abstract}
We consider the linear eigenvalue problem
consisting of the equation
\begin{equation} \tag{1}
 -u'' = \lambda u ,  \quad \text{on $(-1,1)$},
\end{equation}
where $\lambda \in \mathbb{R}$,
together with the general multi-point boundary conditions
\begin{equation} \tag{2}
\alpha_0^\pm u(\pm 1)  + \beta_0^\pm u'(\pm 1) =
\sum^{m^\pm}_{i=1} \alpha^\pm_i u(\eta^\pm_i)
 +
\sum_{i=1}^{m^\pm} \beta^\pm_i u'(\eta^\pm_i) ,
\end{equation}
where
$m^\pm \ge 1$ are integers,
$\alpha_0^\pm,\beta_0^\pm \in \mathbb{R}$,
and, for each
$i = 1,\dots,m^\pm$,
the numbers
$\alpha_i^\pm,\beta_i^\pm \in \mathbb{R}$,
and
$\eta_i^\pm \in [-1,1]$,
with $\eta_i^\pm \ne \pm 1$.
We  also suppose that:
\begin{gather}
\alpha_0^\pm \ge 0, \quad \alpha_0^\pm + |\beta_0^\pm| > 0 , \tag{3}
\\
\pm \beta_0^\pm \ge 0 , \tag{4}
\\
\left( \frac{\sum_{i=1}^{m^\pm} |\alpha_i^\pm|}{\alpha_0^\pm} \right)^2
 +
\left( \frac{\sum_{i=1}^{m^\pm} |\beta_i^\pm|}{\beta_0^\pm} \right)^2
 < 1 , \tag{5}
\end{gather}
with the convention that if any denominator in
(5) is zero then the corresponding numerator must
also be zero, and the corresponding fraction is omitted from (5)
(by (3), at least one denominator is nonzero in each condition).

An {\em eigenvalue} is a number $\lambda$ for which (1)-(2),
has a non-trivial solution $u$ (an {\em eigenfunction}),
and the {\em spectrum}, $\si$,  is the set of eigenvalues.
In this paper we show that the basic spectral
properties of this problem are similar to those of the
standard Sturm-Liouville problem with separated boundary conditions.
Similar multi-point problems have been considered before
under more restrictive hypotheses.
For instance, the cases where
$\beta_i^\pm = 0$, or $\alpha_i^\pm = 0$, $i = 0, \dots, m^\pm$
(such conditions have been termed Dirichlet-type or Neumann-type
respectively),
or the case of a single-point condition at one end point and a
Dirichlet-type or Neumann-type multi-point condition at the other end.
Different oscillation counting methods have been used in each of these
cases, and the results here unify and extend all these previous results
to the above general Sturm-Liouville-type boundary conditions.
\end{abstract}

\maketitle

\section{Introduction}  \label{intro.sec}

%
We consider the linear eigenvalue problem
consisting of the equation
\begin{equation} \label{eval_de.eq}
 -u'' = \la u ,  \quad \text{on $(-1,1)$},
\end{equation}
where $\la \in \R$,
together with the general multi-point boundary conditions
\begin{equation} \label{slbc.eq}
\al_0^\pm u(\pm 1)  + \be_0^\pm u'(\pm 1) =
\sum^{m^\pm}_{i=1} \al^\pm_i u(\eta^\pm_i)
 +
\sum_{i=1}^{m^\pm} \be^\pm_i u'(\eta^\pm_i) ,
\end{equation}
where
$m^\pm \ge 1$ are integers,
$\al_0^\pm,\be_0^\pm \in \R$,
and, for each
$i = 1,\dots,m^\pm$,
the numbers
$\al_i^\pm,\be_i^\pm \in \R$,
and
$\eta_i^\pm \in [-1,1]$,
with $\eta_i^\pm \ne \pm 1$.
We  write
$\al^\pm := (\al_1^\pm,\dots,\al_{m^\pm}^\pm) \in \R^{m^\pm}$,
and similarly for $\be^\pm$, $\eta^\pm$.
The notation $\al^\pm = 0$ or $\be^\pm = 0,$ will mean the zero vector
in $ \R^{m^\pm }$, as appropriate.
Naturally, an {\em eigenvalue} is a number $\la$ for which
\eqref{eval_de.eq}-\eqref{slbc.eq},
has a non-trivial solution $u$ (an {\em eigenfunction}).
The {\em spectrum}, $\si$,  is the set of eigenvalues.
Although the boundary conditions \eqref{slbc.eq} are non-local, for ease
of discussion we will usually say that the condition with
superscript $\pm$ holds
`at the end point $\pm 1$'.

Throughout we will suppose that the following conditions hold:
\begin{gather}
\al_0^\pm \ge 0, \quad \al_0^\pm + |\be_0^\pm| > 0 ,
\label{albe_nz.eq}
\\
\pm \be_0^\pm \ge 0 ,
\label{albe_sign.eq}
\\
\left( \frac{\sum_{i=1}^{m^\pm} |\al_i^\pm|}{\al_0^\pm} \right)^2
 +
\left( \frac{\sum_{i=1}^{m^\pm} |\be_i^\pm|}{\be_0^\pm} \right)^2
 < 1 ,
\label{AB_lin_cond.eq}
\end{gather}
with the convention that if any denominator in
\eqref{AB_lin_cond.eq} is zero then the corresponding numerator must
also be zero, and the corresponding fraction is omitted from
\eqref{AB_lin_cond.eq}
(by \eqref{albe_nz.eq}, at least one denominator is
nonzero in each condition).
The condition \eqref{albe_nz.eq} simply ensures that the boundary
conditions at $\pm 1$ actually involve the values
$u(\pm 1)$ or $u'(\pm 1)$.
We will describe the motivation and consequences of
\eqref{albe_sign.eq} and \eqref{AB_lin_cond.eq}
further here, and also in the following sections.

When $\al^\pm = \be^\pm = 0$ the multi-point boundary conditions
\eqref{slbc.eq} reduce to standard ({\em single-point}) separated
conditions at $x = \pm 1$,
and the overall multi-point problem
\eqref{eval_de.eq}-\eqref{slbc.eq}
reduces to
a separated, linear Sturm-Liouville problem.
Thus, we will term the conditions  \eqref{slbc.eq}
{\em Sturm-Liouville-type} boundary conditions.
The spectral properties of the separated problem are of course well
known,
see for example \cite{CL},
but the spectral properties of the above general multi-point problem
have not previously been obtained.
Indeed, it is only recently that the basic spectral properties of
any multi-point problems have been obtained, and these were obtained
under more restrictive assumptions on the boundary conditions.

Boundary value problems with multi-point boundary conditions have
been extensively studied recently,
see for example,
\cite{BF,DR,GR,GGM,GUP,LIU,MOR1,RYN3,RYN5,RYN6,WI,WL,XU},
and the references therein.
Many of these papers consider the problem on the interval $(0,1)$,
and impose a single-point Dirichlet or Neumann condition at the
end-point $x=0$,
and a multi-point condition at $x=1$.
In our notation, these particular single-point conditions
correspond to the special cases
$\be_0^- = 0$ or $\al_0^- = 0$, respectively
(as well as $\al^- = \be^- = 0$),
so of course are covered by our results here.
We have used the interval $(-1,1)$ in order to simplify the notation
for problems with multi-point boundary conditions at both end-points ---
our results are, of course, independent of the interval on which
the problem is posed.
Problems with a single-point boundary condition at one end-point can
often be treated using shooting methods
(starting at the end with the single-point condition)
and so are considerably simpler to deal with than problems having
multi-point boundary conditions at both end-points
(for which shooting is not possible).
Problems with multi-point conditions at both end-points have been
considered in \cite{GR,GUP,LIU,RYN5,RYN6}
(and in many references therein --- the bibliography in \cite{LIU}
is particularly extensive).

The papers \cite{RYN5} and \cite{RYN6} discussed the following
particular special cases, or {\em types}, of multi-point boundary
conditions:
\begin{alignat}{10}
\text{Dirichlet-type:}&  &\qquad&
\sum_{i=1}^{m^\pm} |\al_i^\pm| < 1 = \al_0^\pm,
&\quad&
\be_0^\pm = 0, \quad \be^\pm = 0 ;
\label{dir_type_bc.eq}
\\
\text{Neumann-type:}&  &&
\al_0^\pm = 0, \quad \al^\pm = 0,
&&
\sum_{i=1}^{m^\pm} |\be_i^\pm| < 1 = \be_0^\pm .
\label{neu_type_bc.eq}
\end{alignat}
This terminology is motivated by observing that a
Dirichlet-type (respectively Neumann-type)
condition reduces to a single-point
Dirichlet (respectively Neumann)
condition when
$\al=0$ (respectively $\be=0$).
The case of a  Dirichlet-type condition at one end point and a
Neumann-type
condition at the other end point was also discussed in \cite{RYN6},
where such conditions were termed {\em mixed}.
Clearly, the hypotheses
\eqref{dir_type_bc.eq} and \eqref{neu_type_bc.eq}
are special cases of the general hypothesis
\eqref{AB_lin_cond.eq},
and in these cases \eqref{albe_sign.eq} can be attained
simply by multiplying the boundary condition at $x=-1$ by $-1$,
so \eqref{albe_sign.eq}  is trivial.
Hence, our results here will unify and generalise all the results in
\cite{RYN5} and \cite{RYN6}.

It was shown in \cite{RYN5} and \cite{RYN6}  that the spectra of
these particular boundary values problems have many of the `standard'
properties of the spectrum of the separated Sturm-Liouville problem,
specifically:
\begin{mylist}
\item[($\si$-a)]
$\si$ is a strictly increasing sequence of real eigenvalues
$\la_k$, $k=0,1,\dots;$
\item[($\si$-b)]
$\lim_{k \to \infty} \la_k = \infty$;
\end{mylist}
for each $k \ge 0$:
\begin{mylist}
\item[($\si$-c)]
$\la_k$ has geometric multiplicity 1;
\item[($\si$-d)]
the eigenfunctions of $\la_k$ have
an `oscillation count' equal to $k$.
\end{mylist}
In the separated problem the oscillation count referred to in
property ($\si$-d) is simply the number of interior ({\em nodal}) zeros
of an eigenfunction.
However, in the multi-point problem it was found in \cite{RYN5} and
\cite{RYN6} that this method of counting eigenfunction oscillations no
longer yields property ($\si$-d), and alternative, slightly ad hoc,
methods were adopted, with different approaches being used for
different types of problem.
We will discuss this further below, and a more detailed discussion is
given in Section~9.4 of \cite{RYN6}.
Suffice it to say, for now, that the eigenfunction oscillation count we
adopt here, based on a Pr\"ufer angle approach
(see Section~\ref{Neu_nodal.sec}), extends and unifies the
disparate approaches adopted in \cite{RYN5} and \cite{RYN6}.

It was also shown in \cite{RYN5} and \cite{RYN6} that, in order to
obtain the spectral properties ($\si$-a)-($\si$-d), the conditions
\eqref{dir_type_bc.eq} and \eqref{neu_type_bc.eq}
are optimal for the
Dirichlet-type and Neumann-type conditions respectively,
in the sense that, in either of these cases, if the inequality $< 1$ in
\eqref{dir_type_bc.eq} or \eqref{neu_type_bc.eq}
is relaxed to $< 1 + \ep$, for any $\ep > 0$,
then $\si$ need not have have
all the properties ($\si$-a)-($\si$-d).
For the general Sturm-Liouville-type boundary conditions
\eqref{slbc.eq}
it will be shown here that if \eqref{albe_sign.eq} and
\eqref{AB_lin_cond.eq}
hold then $\si$  has the properties ($\si$-a)-($\si$-d),
and if either
\eqref{albe_sign.eq} or \eqref{AB_lin_cond.eq}
do not hold then $\si$ need not have all these properties.

\begin{remarks} \label{intro_remarks.rem}
(i)
Changing the length of the interval on which we consider the problem
rescales the coefficients $\be_0^\pm,\be^\pm$,
but not the coefficients $\al_0^\pm,\al^\pm$.
Such a change should not affect our hypotheses on the coefficients,
and indeed the condition
\eqref{AB_lin_cond.eq}
is invariant with respect to such a rescaling.
Thus, the form of condition \eqref{AB_lin_cond.eq} seems natural in
this respect.
\\[1 ex](ii)
In the separated case
(that is, when $\al^\pm = \be^\pm = 0$)
the sign condition \eqref{albe_sign.eq} ensures that $\la_0 > 0$
(except in the Neumann case, when $\la_0 = 0$),
and if this sign condition does not hold then negative eigenvalues may
exist.
It will be shown below that this is also true for the above
Sturm-Liouville-type boundary conditions
(assuming that \eqref{albe_nz.eq} and \eqref{AB_lin_cond.eq} hold);
it will also be shown that negative eigenvalues may have geometric
multiplicity 2.
Of course, this cannot happen in the separated problem
due to uniqueness of the solutions for initial value problems
associated with  \eqref{eval_de.eq}.
Hence, the full set of `standard' properties ($\si$-a)-($\si$-d)
need not hold if the sign condition \eqref{albe_sign.eq} is not
satisfied.
\\[1 ex](iii)
In principle, we should consider the possibility of complex
eigenvalues, especially as the problem is not `self-adjoint'
(without defining this precisely).
Indeed, if we did not impose the condition \eqref{AB_lin_cond.eq} then
complex eigenvalues could in fact occur.
However, with this condition it can be shown that all eigenvalues must
be real --- the proof is very similar to the proof of
Lemma~\ref{la_ge_zero.lem} below, which shows that under our hypotheses
the eigenvalues are positive.
In the light of this we will simply take it for granted throughout
the paper that all our coefficients, functions and function spaces are
real.
\\[1 ex](iv)
We primarily consider the spectral properties
($\si$-a)-($\si$-d) because of their potential applications to nonlinear
problems
(many of the cited references use eigenvalue properties to deal with
nonlinear problems, using relatively standard arguments such as
Rabinowitz' global bifurcation theory).
Of course, there are many other linear spectral properties that could be
investigated, such as eigenfunction expansions (the problem is
not self-adjoint, so this would not be trivial).
However, for brevity, we will omit any discussion of nonlinear problems
or other linear properties here.
\\[1 ex](v)
Boundary conditions having a more general non-local dependence on the
function $u$ than the finite sums of values at points in the interval
$(-1,1)$
(as in \eqref{slbc.eq})
have also been considered recently by several authors,
see for example \cite{WI} and the references therein.
These papers have considered Dirichlet-type and Neumann-type
boundary conditions in which the finite summations have been
replaced with Lebesgue-Stieltjes integrals,
see \cite{WI} for further details
(finite summations can be obtained simply by using step functions in
Lebesgue-Stieltjes integrals, so such integral conditions generalise
the finite summation conditions).
The methods and results below can readily be extended to deal with such
integral formulations of the boundary conditions --- the only
significant additional step required is dealing with the necessary
measure and integration theory.
These measure-theoretic details are described, for
Dirichlet-type and Neumann-type conditions, in \cite{GR}.
Since this step is relatively routine we will avoid all such
measure-theoretic difficulties here by simply considering the
finite summation conditions \eqref{slbc.eq}.
\end{remarks}

\subsection{Plan of the paper}  \label{plan.subsec}

The paper is organised as follows.
In Section~\ref{De_op.sec} we introduce various function spaces, and
then use these to define an operator realization of the multi-point
problem, and state the main properties of this operator.
In Section~\ref{single_bc.sec} we prove an existence and uniqueness
result for a problem consisting of equation \eqref{eval_de.eq} together
with a single, multi-point, boundary condition.
This problem could be regarded as a multi-point analogue of the usual
initial value problem for equation \eqref{eval_de.eq}.
We also give some counter examples which show that this uniqueness
result can fail in the multi-point setting when $\la < 0$.
As mentioned in Remark~\ref{intro_remarks.rem}-(ii),
the uniqueness result for this `multi-point, initial value
problem' then implies the simplicity of the eigenvalues of
\eqref{eval_de.eq},  \eqref{slbc.eq}, in the usual manner,
and the loss of this uniqueness can result in the existence
of eigenvalues having geometric multiplicity 2.
In particular, this shows the necessity of the sign condition
\eqref{albe_sign.eq}
if we wish to obtain all the properties ($\si$-a)-($\si$-d).

Our main results are obtained in Section~\ref{evals.sec}.
In Section~\ref{Neu_nodal.sec} we describe a Pr\"ufer angle method of
counting the oscillations of the eigenfunctions, and we then use this
technique in Section~\ref{spec.sec} to obtain our main results regarding
the properties of the spectrum.
We also show that this Pr\"ufer angle construction generalises and
unifies the various oscillation counting methods used in
\cite{RYN5} and \cite{RYN6}  in the Dirichlet-type, Neumann-type and
mixed cases respectively.
In Section~\ref{positivity.sec} we show that, under suitable
additional hypotheses, the principal eigenfunction is positive.
In Section~\ref{alg_mult.sec} we reinterpret the eigenvalues as the
characteristic values of the inverse operator constructed in
Section~\ref{De_op.sec}, and show that these characteristic values
have algebraic multiplicity 1;
this result then yields the value of the topological degree of an
associated linear operator.
In Section~\ref{eval_counterexamples.sec} we give some counter examples
to show the necessity of the hypothesis \eqref{AB_lin_cond.eq}.

\subsection{Some further notation}  \label{notation.subsec}

Clearly,  the eigenvalues $\la_k$ (and other objects to be introduced
below) depend on the values of the coefficients
$\al_0^\pm,\,\be_0^\pm,\,\al^\pm,\,\be^\pm,\,\eta^\pm,$
but in general we regard these coefficients as fixed,
and omit them from our notation.
However, at certain points of the discussion it will be convenient to
regard some, or all, of these coefficients as variable,
and to indicate the dependence of various functions on these
coefficients.
To do this concisely we will write:

$\bal_0 := (\al_0^-,\al_0^+) \in \R^2$
(for given numbers $\al_0^\pm \in \R$);

\ \,$\bal := (\al^-,\al^+) \in \R^{m^- + m^+}$
(for given coefficient vectors $\al^\pm \in \R^{m^\pm}$);
\\
and similarly for $\bbe_0,\,\bbe,\,\bfeta$.
We also define $\bzero := (0,0)  \in \R^{m^- + m^+}$.
We may then write, for example, $\la_k(\bal,\bbe)$
to indicate the dependence of $\la_k$ on $(\bal,\bbe)$.

In most of the paper we will regard $(\bal_0,\bbe_0)$ as
fixed, but at some points in the discussion it will be convenient to
allow $(\bal,\bbe)$
to vary, so long as the conditions
\eqref{albe_nz.eq}-\eqref{AB_lin_cond.eq}
continue to hold.
To describe this we define the following sets,
for any  $(\bal_0,\bbe_0) \in \R^4$ satisfying
\eqref{albe_nz.eq} and \eqref{albe_sign.eq}:
\begin{align*}
\B(\al_0^\pm,\be_0^\pm) &:=
\{ (\al^\pm,\be^\pm) \in \R^{2m^\pm} :
\text{$(\al_0^\pm,\be_0^\pm,\al^\pm,\be^\pm)$
satisfies \eqref{AB_lin_cond.eq}} \} ,
\\
\B(\bal_0,\bbe_0) &:=
\{ (\bal,\bbe) \in \R^{2(m^- + m^+)} :
\text{$(\bal_0,\bbe_0,\bal,\bbe)$
satisfies \eqref{AB_lin_cond.eq}} \}
\end{align*}
(so $\B(\bal_0,\bbe_0)$ is isomorphic to
$\B(\al_0^-,\be_0^-) \times \B(\al_0^+,\be_0^+)$);
we also define the set
$$
\B :=
\{ (\bal_0,\bbe_0,\bal,\bbe) \in \R^{2(2+m^- + m^+)} :
\text{\eqref{albe_nz.eq}-\eqref{AB_lin_cond.eq} hold} \} .
$$

At some points, when dealing with individual boundary conditions,
it will be convenient to let $\nu$ denote one of the signs $\{\pm\}$,
in which case, for a function $u$, the notation $u(\nu)$ will denote the
value of $u$ at the corresponding end point $\pm 1$.

\section{An operator realisation of the multi-point problem}
\label{De_op.sec}

For any integer $n \ge 0$, let $C^n[-1,1]$ denote the usual Banach
space of $n$-times continuously differentiable functions on $[-1,1]$,
with the usual sup-type norm, denoted by $|\cdot|_n$.
A suitable space in which to search for solutions of
\eqref{eval_de.eq},
incorporating the boundary conditions \eqref{slbc.eq},
is the space
\begin{align*}
X &:= \{u \in C^2[-1,1] :
\text{$u$ satisfies \eqref{slbc.eq}} \},
\\
\|u\|_X  &:= |u|_2  ,  \quad  u \in X.
\end{align*}
Letting
$Y:=C^0[-1,1]$, with the norm $\|\cdot \|_Y := |\cdot |_0$,
we now define  an operator $\De : X \to Y$ by
\[
\De u := u'',  \quad u \in X.
\]
By the definition of the spaces $X$, $Y$, the operator
$\De$ is a well-defined, bounded, linear operator,
and the eigenvalue problem
\eqref{eval_de.eq}-\eqref{slbc.eq}
can be rewritten in the form
$ -\De(u)  = \la u $, $u \in X $.
We will consider the eigenvalue problem in Section~\ref{spec.sec}
below, for now we will consider the invertibility of $\De$.

In the  Neumann-type case (that is, when $\al_0^\pm = 0$) it is clear
that any constant function $c$ lies in $X$, and $\De c = 0$,
so $\De$ cannot be invertible.
Thus, to obtain invertibility it is necessary to exclude
the Neumann-type case.
In view of the assumption \eqref{albe_nz.eq}, we can achieve this by
imposing the further condition
\begin{equation}  \label{al_pm_strict_pos.eq}
 \al_0^- +  \al_0^+ > 0.
\end{equation}
The following theorem shows that this condition is sufficient to ensure
invertibility of $\De$.

\begin{thm}  \label{De_inverse.thm}
Suppose that \eqref{albe_nz.eq}-\eqref{AB_lin_cond.eq} and
\eqref{al_pm_strict_pos.eq} hold.
Then $\De : X \to Y$ has a bounded inverse.
\end{thm}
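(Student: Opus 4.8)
The plan is to reduce the invertibility of $\De$ to solving the inhomogeneous equation $u'' = f$ for arbitrary $f \in Y = C^0[-1,1]$, subject to the two boundary conditions in \eqref{slbc.eq}. I first note that $X$ is the intersection of the kernels of the two boundary functionals, which are continuous on $C^2[-1,1]$, so $X$ is a closed subspace of $C^2[-1,1]$ and hence a Banach space. Since $\De$ is bounded and linear, it then suffices to prove that $\De$ is a bijection, after which the bounded inverse theorem yields a bounded inverse (the bound can also be read off directly from the construction below).

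Given $f \in Y$, every $C^2$ solution of $u'' = f$ has the form $u = u_p + a + bx$, where $u_p(x) := \int_{-1}^x (x-s) f(s)\,ds$ is a fixed particular solution depending linearly and boundedly on $f$ (indeed $|u_p|_2 \le C|f|_0$ for some constant $C$), and $a,b \in \R$ are free. Writing, for each sign $\nu = \pm$,
\[
L^\nu(u) := \al_0^\nu\, u(\nu) + \be_0^\nu\, u'(\nu)
 - \sum_{i=1}^{m^\nu}\bigl(\al_i^\nu u(\eta_i^\nu) + \be_i^\nu u'(\eta_i^\nu)\bigr),
\]
membership $u \in X$ is exactly the pair of equations $L^+(u) = L^-(u) = 0$, which becomes the linear system
\[
a\,L^\nu(1) + b\,L^\nu(x) = -L^\nu(u_p), \qquad \nu = \pm,
\]
for $(a,b)$. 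This system has a unique solution for every right-hand side precisely when its determinant
\[
D := L^+(1)\,L^-(x) - L^+(x)\,L^-(1)
\]
is nonzero; since the $L^\nu$ are bounded on $C^2[-1,1]$, the resulting $(a,b)$ then depend boundedly on $f$, giving $\De^{-1}f = u_p + a + bx$ with $\|\De^{-1}f\|_X \le C'\|f\|_Y$. Thus the whole theorem reduces to showing $D \ne 0$.

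Establishing $D \ne 0$ is the main obstacle, and it is here that all the hypotheses are used. A direct computation gives $L^\nu(1) = \al_0^\nu - \sum_i \al_i^\nu$, together with $L^+(x) = \al_0^+ + \be_0^+ - \sum_i \al_i^+\eta_i^+ - \sum_i \be_i^+$ and $L^-(x) = -\al_0^- + \be_0^- - \sum_i \al_i^-\eta_i^- - \sum_i \be_i^-$. Condition \eqref{AB_lin_cond.eq} (with its convention) forces $\sum_i|\al_i^\nu| < \al_0^\nu$ whenever $\al_0^\nu > 0$ and $\sum_i|\be_i^\nu| < |\be_0^\nu|$ whenever $\be_0^\nu \ne 0$, so $L^\nu(1) \ge 0$, with strict inequality when $\al_0^\nu > 0$. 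Using $|\eta_i^\nu| \le 1$ and the sign condition \eqref{albe_sign.eq}, namely $\be_0^+ \ge 0 \ge \be_0^-$, the same bounds yield $L^+(x) > 0$ and $L^-(x) < 0$. Finally I would split into cases according to whether $\al_0^+$ and $\al_0^-$ vanish: condition \eqref{al_pm_strict_pos.eq} rules out both vanishing simultaneously, and in each remaining case the signs just established (noting that when $\al_0^\nu = 0$ one has $L^\nu(1) = 0$ while $L^\nu(x)$ is determined by the $\be$-terms alone, with $\be_0^\nu \ne 0$ guaranteed by \eqref{albe_nz.eq}) force $D < 0$. In particular $D \ne 0$, which completes the argument.
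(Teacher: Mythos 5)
Your proof is correct and follows essentially the same route as the paper: the heart of both arguments is the observation that the homogeneous solutions are $c_0+c_1x$ and that the resulting $2\times 2$ system has nonzero determinant, established by exactly the sign analysis you give ($L^\nu(1)=\al_0^\nu-\sum_i\al_i^\nu\ge 0$ with strictness forced somewhere by \eqref{al_pm_strict_pos.eq}, and $\pm L^\pm(x)>0$ from \eqref{albe_sign.eq} and \eqref{AB_lin_cond.eq}). The only difference is that you make the existence half self-contained via the explicit particular solution $u_p$ and the bounded inverse theorem, whereas the paper delegates that part to the integral-operator construction of Theorem~3.1 in \cite{RYN5}.
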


\begin{proof}

We will show that the equation
\begin{equation}  \label{De_inverse.eq}
\De u = h, \quad  h \in Y,
\end{equation}
has a unique solution for all $h \in Y$.
Following the proof of Theorem~3.1 in \cite{RYN5}
(which considers Dirichlet-type conditions and constructs a solution of
\eqref{De_inverse.eq} via a compact integral operator)
shows that it suffices to prove the uniqueness of the solutions of
\eqref{De_inverse.eq}.
To prove this we observe that any solution $u_0$ of
\eqref{De_inverse.eq}
with $h=0$ must have the form
$u_0(x) = c_0 + c_1 x$, for some $(c_0,c_1) \in \R^2$,
and substituting $u_0$ into the boundary
conditions \eqref{slbc.eq} yields the pair of equations
\begin{equation} \label{De_inverse_coeffs.eq}
c_0 \Big( \al_0^\pm -  \sum_{i=1}^{m^\pm} \al_i^\pm \Big )
 +
c_1 \Big(\be_0^\pm - \sum_{i=1}^{m^\pm} \be_i^\pm
 \pm \al_0^\pm  - \sum_{i=1}^{m^\pm} \al_i^\pm \eta_i^\pm \Big)
= 0 .
\end{equation}
It now follows from
\eqref{albe_nz.eq}-\eqref{AB_lin_cond.eq}
that
$$
\al_0^\pm - \sum_{i=1}^{m^\pm} \al_i^\pm \ge 0,
\quad
\pm \Big(\be_0^\pm - \sum_{i=1}^{m^\pm} \be_i^\pm
 \pm \al_0^\pm  - \sum_{i=1}^{m^\pm} \al_i^\pm \eta_i^\pm \Big)
 > 0,
$$
and it follows from \eqref{al_pm_strict_pos.eq} that at least one of
the left hand inequalities here is strict.
These sign properties now ensure that the determinant associated with
the pair of equations \eqref{De_inverse_coeffs.eq} is non-zero,
so that $(c_0,c_1) = (0,0)$ is the unique solution of
\eqref{De_inverse_coeffs.eq}.
This proves the desired uniqueness result for \eqref{De_inverse.eq}, and
hence proves the theorem.
\end{proof}

In applications, continuity properties of the inverse operator
$\De^{-1}$ with respect to the various parameters in the problem
are important.
We will describe one such result ---
other such results could be obtained in a similar manner.

\begin{cor} \label{De_inverse_cts.cor)}
The operator
$\De(\bal_0,\bbe_0,\bal,\bbe)^{-1} : Y \to C^2[-1,1]$
depends continuously on
$(\bal_0,\bbe_0,\bal,\bbe) \in \B \setminus
\{ (\bal_0,\bbe_0,\bal,\bbe) : \al_0^- + \al_0^- = 0\}$
$($with respect to the usual topology for bounded linear operators$)$.
\end{cor}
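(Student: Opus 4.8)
The plan is to make the construction used in the proof of Theorem~\ref{De_inverse.thm} completely explicit, thereby reducing the continuity of the operator $\De^{-1}$ to the continuity of the inversion of a $2\times 2$ matrix. Write $p := (\bal_0,\bbe_0,\bal,\bbe)$ for the tuple of varying parameters. First I would fix, once and for all, a particular-solution operator $P : Y \to C^2[-1,1]$, \emph{independent} of $p$, satisfying $(Ph)'' = h$; concretely one may take $(Ph)(x) = \int_{-1}^x (x-t)\,h(t)\,dt$, which is a bounded linear map into $C^2[-1,1]$ with $(Ph)(-1) = (Ph)'(-1) = 0$. Every solution of $\De u = h$ then has the form $u = Ph + c_0 + c_1 x$ for some $(c_0,c_1) \in \R^2$.

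Substituting $u = Ph + c_0 + c_1 x$ into the two boundary conditions \eqref{slbc.eq} yields a linear system $M(p)\,(c_0,c_1)^{\mathrm T} = b(p,h)$, in which the coefficient matrix $M(p)$ is exactly the matrix of the homogeneous system \eqref{De_inverse_coeffs.eq}, while the right-hand side $b(p,h)$ collects the contributions of $Ph$ and $(Ph)'$, evaluated at $\pm 1$ and at the (fixed) points $\eta_i^\pm$. The entries of $M(p)$ are affine in $p$, and for each fixed $h$ the vector $b(p,h)$ is affine in $p$ and hence continuous in $p$; moreover $b(p,\cdot)$ is a bounded linear map of $h \in Y$ whose norm is bounded uniformly on compact parameter sets, since point evaluation of $Ph$ and $(Ph)'$ is bounded on $C^2[-1,1]$.

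On the set $\B \setminus \{ \al_0^- + \al_0^+ = 0\}$ one has $\al_0^\pm \ge 0$ by \eqref{albe_nz.eq}, so condition \eqref{al_pm_strict_pos.eq} holds there; the sign and determinant computation in the proof of Theorem~\ref{De_inverse.thm} then gives $\det M(p) \ne 0$ throughout this set, so $M(p)^{-1}$ exists. Consequently $\De^{-1}$ admits the explicit representation $\De^{-1} h = Ph + c_0 + c_1 x$, with $(c_0,c_1)^{\mathrm T} = M(p)^{-1} b(p,h)$, exhibiting $\De^{-1}$ as a composition of the fixed operator $P$, the parameter-dependent data $b(p,\cdot)$, the finite-dimensional inverse $M(p)^{-1}$, and the fixed map $(c_0,c_1) \mapsto c_0 + c_1 x$. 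Since matrix inversion is continuous on the set of invertible $2\times 2$ matrices and $p \mapsto M(p)$ is continuous with non-vanishing determinant, $p \mapsto M(p)^{-1}$ is continuous; combined with the continuity of $b(p,\cdot)$ and the boundedness of $P$, this gives continuity of $p \mapsto \De^{-1}$ in the operator norm on bounded linear maps $Y \to C^2[-1,1]$.

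The one point requiring care — and the reason the statement is phrased with codomain $C^2[-1,1]$ rather than $X$ — is that the solution space $X = X(p)$ itself varies with $p$, so the operators $\De(p)$ do not share a common domain and the usual resolvent-perturbation estimate $\|A^{-1} - B^{-1}\| \le \|A^{-1}\|\,\|B^{-1}\|\,\|A - B\|$ is not directly available. The explicit representation above circumvents this difficulty by working only on the fixed spaces $Y$ and $C^2[-1,1]$ and transferring the entire parameter dependence into the finite-dimensional object $M(p)^{-1} b(p,\cdot)$, whose continuity is elementary. This is the main obstacle, and the explicit construction is precisely what resolves it.
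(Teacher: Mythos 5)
Your proof is correct and is essentially the same argument as the paper's: the paper's one-line proof appeals to the explicit construction of $\De^{-1}$ (via a particular-solution integral operator and boundary-correction functions $\Phi^\pm$) given in the proof of Theorem~3.1 of \cite{RYN5}, noting that all ingredients depend continuously on $(\bal_0,\bbe_0,\bal,\bbe)$, which is exactly what your decomposition $\De^{-1}h = Ph + c_0 + c_1x$ with $(c_0,c_1)^{\mathrm T} = M(p)^{-1}b(p,h)$ makes explicit. Your version has the merit of being self-contained (and of flagging the genuine point that the domain $X(p)$ varies with $p$, which is why the codomain is taken to be $C^2[-1,1]$), but it is the same route.
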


\begin{proof}
The functions $\Phi^\pm$ in the construction of
$\De(\bal_0,\bbe_0,\bal,\bbe)^{-1}$ in the proof of
Theorem~\ref{De_inverse.thm} in \cite{RYN5} are continuous with respect
to $(\bal_0,\bbe_0,\bal,\bbe)$,
so the result follows immediately from that proof.
\end{proof}

\begin{remark}  \label{De_inverse.rem}
We have used the spaces $C^n[-1,1]$, $n=0,\,2,$
to define the operator $\De$,
and Theorem~\ref{De_inverse.thm} showed that the resulting
operator is invertible.
This is the function space setting that we will use here.
However, one could also use a Sobolev space setting to define
a similar operator as follows.
For arbitrary fixed $q \ge 1$,
let
\begin{align*}
\tY := L^q(-1,1),
\qquad
\tX &:= \{u \in W^{2,q}[-1,1] :
\text{$u$ satisfies \eqref{slbc.eq}}\}.
\end{align*}
Then  $\tDe : \tX \to \tY$ can be defined in the obvious manner,
and a similar proof to that of Theorem~\ref{De_inverse.thm} shows that
$\tDe$ is invertible.
\end{remark}

\section{Problems with a single boundary condition}
\label{single_bc.sec}

In this section we consider the following problem with a single,
multi-point boundary condition,
\begin{gather}
 -u'' = \la u ,  \quad \text{on $\R$},
\label{single_bc_de.eq}
\\
\al_0 u(\eta_0)  + \be_0 u'(\eta_0) =
\sum_{i=1}^m \al_i u(\eta_i)
 +
\sum_{i=1}^m \be_i u'(\eta_i) ,
\label{single_bc.eq}
\end{gather}
where
$m \ge 1$,
$\al_0,\be_0,\eta_0 \in \R$,
and
$\al,\be,\eta \in \R^m$.
The conditions \eqref{albe_nz.eq} and \eqref{AB_lin_cond.eq}
have obvious analogues in the current setting, simply by omitting the
superscripts $\pm$,
which we will use without further comment,
while the condition \eqref{albe_sign.eq} has no analogue here and
$\be_0$ may have either sign.
For any $(\al_0,\be_0)$ satisfying
\eqref{albe_nz.eq}
we let $\B(\al_0,\be_0)$ denote the set of
$(\al,\be) \in \R^{2m}$ satisfying \eqref{AB_lin_cond.eq}.

\begin{thm}  \label{single_bc.thm}
Suppose that
$(\al_0,\be_0,\al,\be)$ satisfies
\eqref{albe_nz.eq} and \eqref{AB_lin_cond.eq},
and $\la \ge 0$.
Then the set of solutions of
\eqref{single_bc_de.eq}, \eqref{single_bc.eq},
is one-dimensional.
\end{thm}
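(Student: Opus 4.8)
The plan is to note that the solutions of the differential equation \eqref{single_bc_de.eq} alone form the two-dimensional solution space $V$ of the linear ODE $-u''=\la u$, and that the single boundary condition \eqref{single_bc.eq} is exactly the requirement $L(u)=0$, where $L:V\to\R$ is the linear functional
\[
L(u) := \al_0 u(\eta_0) + \be_0 u'(\eta_0) - \sum_{i=1}^m \al_i u(\eta_i) - \sum_{i=1}^m \be_i u'(\eta_i) .
\]
Since $L$ is a single linear functional on the two-dimensional space $V$, its kernel (which is precisely the solution set in the statement) has dimension $1$ when $L\not\equiv 0$ and dimension $2$ otherwise. The entire theorem therefore reduces to showing that $L$ does not vanish identically on $V$, and this is exactly where hypothesis \eqref{AB_lin_cond.eq} will be used.

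For $\la>0$, set $\om:=\sqrt\la$ and split $L=L_0-L_1$, where $L_0(u):=\al_0 u(\eta_0)+\be_0 u'(\eta_0)$ is the single-point part and $L_1(u):=\sum_i\al_i u(\eta_i)+\sum_i\be_i u'(\eta_i)$ is the multi-point part. The device I would use is the energy identity $u(x)^2+\om^{-2}u'(x)^2=\text{const}=:\|u\|_E^2$, which holds for every solution and is independent of $x$; thus $\|\cdot\|_E$ is a norm on $V$ and, at every point, $|u(\eta_i)|\le\|u\|_E$ and $|u'(\eta_i)|\le\om\|u\|_E$. Computing dual norms, maximising $L_0$ over the phase of $u$ gives $\|L_0\|_E=\sqrt{\al_0^2+\om^2\be_0^2}$, while the triangle inequality gives $\|L_1\|_E\le\sum_i|\al_i|+\om\sum_i|\be_i|$. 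A single Cauchy--Schwarz estimate then shows
\[
\sum_i|\al_i|+\om\sum_i|\be_i|
\le
\left( \Big(\tfrac{\sum_i|\al_i|}{\al_0}\Big)^2 + \Big(\tfrac{\sum_i|\be_i|}{\be_0}\Big)^2 \right)^{1/2}\sqrt{\al_0^2+\om^2\be_0^2} ,
\]
so that \eqref{AB_lin_cond.eq} forces $\|L_1\|_E<\|L_0\|_E$. Equal functionals would have equal norms, so $L_0\ne L_1$ and hence $L\not\equiv 0$, as required.

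The case $\la=0$ must be treated separately, since the energy norm degenerates. Here I would simply substitute the general solution $u=c_0+c_1x$ into $L$ to obtain
\[
L(u)=c_0\Big(\al_0-\sum_i\al_i\Big)+c_1\Big(\al_0\eta_0+\be_0-\sum_i\al_i\eta_i-\sum_i\be_i\Big) .
\]
If $\al_0>0$ then \eqref{AB_lin_cond.eq} gives $\sum_i|\al_i|<\al_0$, so the coefficient of $c_0$ is strictly positive and $L\not\equiv0$; if $\al_0=0$ then the convention attached to \eqref{AB_lin_cond.eq} forces every $\al_i=0$ and $\sum_i|\be_i|<|\be_0|$, so the coefficient of $c_1$ reduces to $\be_0-\sum_i\be_i\ne0$. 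In either sub-case $L\not\equiv0$, which finishes the argument.

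I expect the main obstacle to be the $\la>0$ estimate: one must recognise the correct energy norm, verify that the single-point functional $L_0$ has dual norm exactly $\sqrt{\al_0^2+\om^2\be_0^2}$, and observe that \eqref{AB_lin_cond.eq} is precisely the Cauchy--Schwarz-sharp condition ensuring $\|L_1\|_E<\|L_0\|_E$. A secondary, purely clerical, point is handling the degenerate cases $\al_0=0$ or $\be_0=0$ under the stated convention for vanishing denominators, although these are in fact subsumed in the same inequality once the omitted fraction is read correctly.
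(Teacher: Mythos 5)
Your proof is correct, and it takes a genuinely different route from the paper's. You observe that the solution set is exactly the kernel of the linear functional $L$ restricted to the two-dimensional solution space $V$ of the ODE, so the whole theorem reduces to $L\not\equiv 0$ on $V$; you then get this from a dual-norm estimate with respect to the conserved energy $u^2+\la^{-1}(u')^2$, showing $\|L_1\|_E\le\sum_i|\al_i|+\la^{1/2}\sum_i|\be_i|<\bigl(\al_0^2+\la\be_0^2\bigr)^{1/2}=\|L_0\|_E$ via Cauchy--Schwarz. The paper instead parametrises the lines in $V$ by the phase $\th$, defines $\Ga(\th,\al,\be)$, proves (Lemma~\ref{single_bc_simple.lem}) that every zero of $\Ga(\cdot,\al,\be)$ is \emph{simple} by a trigonometric estimate, and then runs a continuation in $(\al,\be)$ from the separated case $(\al,\be)=(0,0)$ using the implicit function theorem to conclude there is exactly one zero in $[0,\pi)$. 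The two inequalities are really the same: the paper's choice of $b_0\in[0,\pi/2]$ with $\sum|\al_i|\le\al_0\cos b_0$, $\sum|\be_i|\le|\be_0|\sin b_0$ is precisely your Cauchy--Schwarz angle. What your shorter argument does \emph{not} deliver is the simplicity of the zero of $\Ga$, i.e.\ the nonvanishing of $\Ga_\th$ at solutions; that transversality is not a consequence of the dimension count and is exactly what the paper reuses later (in Lemma~\ref{simple.lem} and the Jacobian condition needed for Theorem~\ref{spec.thm}). So your proof is a cleaner and fully adequate proof of the stated theorem, while the paper's heavier machinery is earning its keep elsewhere. Your treatment of $\la=0$ coincides with the paper's.
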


\begin{proof}
If $\la=0$ then any solution of \eqref{single_bc_de.eq} has the form of
$u_0$ used in the proof of Theorem~\ref{De_inverse.thm},
and substituting $u_0$ into \eqref{single_bc.eq} yields a linear
equation relating the coefficients $c_0,c_1$.
A similar argument to the proof of Theorem~\ref{De_inverse.thm} now
shows that the set of solutions of this equation is one-dimensional.

Now suppose that $\la > 0$.
For any $s > 0$, $\th \in \R$, we define
$w(s,\th) \in C^1(\R)$ by
\begin{equation}  \label{s_soln_form.eq}
w(s,\th)(x) := \sin(s x+\th) , \quad x \in \R .
\end{equation}
Clearly, any solution of \eqref{single_bc_de.eq}
must have the form $u = C w(s,\th)$,
with $s = \la^{1/2}$ and suitable $C,\,\th \in \R$.
For the rest of this proof we regard $\th$, $\al$, $\be$ as variable,
but all the other parameters and coefficients will be regarded as fixed
and omitted from the notation when this is convenient.
Defining $\Ga : \R \X \R^{2m} \to \R$ by
\begin{align*}
\Ga(\th,\al,\be)  :=
\al_0 &\sin(s \eta_0 +\th)  + s \be_0 \cos(s \eta_0 +\th)
 - \sum_{i=1}^m \al_i \sin(s\eta_i+\th)
\\
&
 - s \sum_{i=1}^m \be_i \cos(s\eta_i+\th) ,
\end{align*}
it is clear that $\Ga$ is $C^1$,
and substituting \eqref{s_soln_form.eq} into \eqref{single_bc.eq}
shows that $w(s,\th)$ satisfies
\eqref{single_bc_de.eq}, \eqref{single_bc.eq} if and only if
\begin{equation}  \label{single_bc_zeros.eq}
\Ga(\th,\al,\be) = 0 .
\end{equation}
Hence, it suffices to consider the set of
solutions of \eqref{single_bc_zeros.eq}.

Next, by definition, for any $(\al,\be) \in \R^{2m}$
the function $\Ga(\cdot,\al,\be)$ is $\pi$-antiperiodic,
so to prove the theorem it suffices to show that
if $(\al,\be) \in \B(\al_0,\be_0)$ then $\Ga(\cdot,\al,\be)$
has exactly one zero in the interval $[0,\pi)$
(by $\pi$-antiperiodicity, other zeros of $\Ga(\cdot,\al,\be)$ do not
contribute distinct solutions of
\eqref{single_bc_de.eq}, \eqref{single_bc.eq}).
We will prove this by a continuation argument.

We first observe that if $(\al,\be) = (0,0)$ then
$\Ga(\cdot,0,0)$ has exactly 1 zero in $[0,\pi)$ and this zero is
simple.
To extend this property to $(\al,\be) \ne (0,0)$ we will require the
following lemma
($\Ga_\th$ will denote the partial derivative of $\Ga$ with respect to
$\th$).

\begin{lemma}  \label{single_bc_simple.lem}
Suppose that
$(\al_0,\be_0,\al,\be)$ satisfies
\eqref{albe_nz.eq} and \eqref{AB_lin_cond.eq},
and
$\la > 0$.
Then
$$
\Ga(\th,\al,\be) = 0  \implies  \Ga_\th(\th,\al,\be) \ne 0.
$$
\end{lemma}

\begin{proof}
Suppose, on the contrary, that
\begin{equation}  \label{single_bc_Ga_zeros_simple.eq}
\Ga(\th,\al,\be) = \Ga_\th(\th,\al,\be) = 0,
\end{equation}
for some $\th \in \R$ and $(\al,\be) \in \B(\al_0,\be_0)$.
We now regard $(\th,\al,\be)$ as fixed, and write
$$
S(\eta) := \sin(s\eta+\th),
\quad
C(\eta) := \cos(s\eta+\th) .
$$
With this notation, equations
\eqref{single_bc_Ga_zeros_simple.eq}
become
\begin{align}
\al_0 S(\eta_0) + s \be_0 C(\eta_0)
&= \sum_{i=1}^m \big( \al_i S(\eta_i) + s \be_i C(\eta_i) \big) ,
\label{Ga_zeros_lin_a.eq}
\\
\al_0 C(\eta_0) - s \be_0 S(\eta_0)
&= \sum_{i=1}^m \big( \al_i C(\eta_i) - s \be_i S(\eta_i) \big) .
\label{Ga_zeros_lin_b.eq}
\end{align}
By \eqref{AB_lin_cond.eq} we can choose  $b_0 \in [0 , \pi/2]$ such
that, with $C_b := \cos b_0$, $S_b := \sin b_0$,
\begin{equation}  \label{AB_lin_cond_sc.eq}
\sum^m_{i=1} |\al_i| \le C_b \al_0  ,
\quad
\sum^m_{i=1} |\be_i| \le S_b |\be_0|  ,
\end{equation}
with at least one strict inequality in \eqref{AB_lin_cond_sc.eq}.

Now suppose that $\be_0 \ge 0$.
Elementary operations on
\eqref{Ga_zeros_lin_a.eq}, \eqref{Ga_zeros_lin_b.eq} now yield
\begin{align*}
&C_b \al_0 +  S_bs \be_0 =
\\[1 ex]
& =
\sum_{i=1}^m \al_i
\Bigl(
C_b ( S(\eta_0) S(\eta_i) + C(\eta_0) C(\eta_i))
+
S_b ( C(\eta_0) S(\eta_i) - S(\eta_0) C(\eta_i))
\Bigr)
\\[1 ex]
& \quad + s \sum_{i=1}^m \be_i
\Bigl(
C_b ( S(\eta_0) C(\eta_i) - C(\eta_0) S(\eta_i))
+
S_b ( C(\eta_0) C(\eta_i) + S(\eta_0) S(\eta_i))
\Bigr)
\\[1 ex]
& =
\sum_{i=1}^m \al_i
\Bigl(
C_b \cos s(\eta_0-\eta_i) - S_b \sin s(\eta_0 - \eta_i)
\Bigr)
\\[1 ex]
& \quad
 + s \sum_{i=1}^m \be_i
\Bigl(
C_b \sin s(\eta_0 - \eta_i) + S_b \cos s(\eta_0 - \eta_i)
\Bigr)
\\[1 ex]
& =
\sum_{i=1}^m \al_i  \cos s(b_0 + \eta_0 - \eta_i)
+
s \sum_{i=1}^m \be_i  \sin s(b_0 - \eta_0 + \eta_i)
\\[1 ex]
&
\le
\sum_{i=1}^m |\al_i|
+
s \sum_{i=1}^m |\be_i|
 <
C_b \al_0 +  S_b s \be_0 ,
\end{align*}
by \eqref{AB_lin_cond_sc.eq}.
This contradiction shows that \eqref{single_bc_Ga_zeros_simple.eq}
cannot hold, and so proves the lemma,
when $\be_0 \ge 0$.
If $\be_0 < 0$ then we simply replace
$C_b \al_0 +  S_b s \be_0$
with
$C_b \al_0 - S_b  s \be_0$
in the above calculation to obtain a similar contradiction,
which completes the proof of Lemma~\ref{single_bc_simple.lem}.
\end{proof}

Now, since the set  $\B(\al_0,\be_0)$ is
connected it follows from continuity, together with
Lemma~\ref{single_bc_simple.lem}, the implicit function theorem  and the
$\pi$-antiperiodicity of $\Ga(\cdot,\al,\be)$, that
$\Ga(\cdot,\al,\be)$ has exactly 1 (simple) zero in $[0,\pi)$
for all $(\al,\be) \in  \B(\al_0,\be_0)$.
This completes the proof of Theorem~\ref{single_bc.thm}.
\end{proof}

For Dirichlet-type and Neumann-type problems,
Theorem~\ref{single_bc.thm} was proved in \cite{RYN5} and \cite{RYN6},
respectively.
An adaptation of the proof of
Lemma~\ref{single_bc_simple.lem}
also yields the following result, which will be crucial below.

\begin{lemma}  \label{bcerp.lem}
Suppose that
$\la > 0$
and $(\al_0,\be_0,\al,\be)$ satisfies
\eqref{albe_nz.eq} and \eqref{AB_lin_cond.eq}.
If $u$ is a non-trivial solution of
\eqref{single_bc_de.eq}, \eqref{single_bc.eq} then
\begin{equation}  \label{bcnonperp.eq}
\la \be_0 u(\eta_0)  - \al_0 u'(\eta_0) \ne 0.
\end{equation}
\end{lemma}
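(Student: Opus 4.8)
The plan is to reduce the inequality \eqref{bcnonperp.eq} to the statement that a single trigonometric expression is nonzero, and then to derive a contradiction by the same bounding technique used in Lemma~\ref{single_bc_simple.lem}.

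First I would use that, since $\la > 0$, the non-trivial solution has the form $u = C w(s,\th)$ with $s = \la^{1/2}$, $C \ne 0$, and some $\th \in \R$, exactly as in the proof of Theorem~\ref{single_bc.thm}. Keeping the abbreviations $S(\eta) = \sin(s\eta + \th)$ and $C(\eta) = \cos(s\eta + \th)$ from Lemma~\ref{single_bc_simple.lem}, a direct computation gives $u(\eta_0) = C\,S(\eta_0)$ and $u'(\eta_0) = C s\,C(\eta_0)$, so that
\[
\la\be_0 u(\eta_0) - \al_0 u'(\eta_0) = -C s\bigl(\al_0 C(\eta_0) - s\be_0 S(\eta_0)\bigr).
\]
Since $C \ne 0$ and $s > 0$, proving \eqref{bcnonperp.eq} is equivalent to showing that $B := \al_0 C(\eta_0) - s\be_0 S(\eta_0) \ne 0$; note that $B$ is exactly the left-hand side of \eqref{Ga_zeros_lin_b.eq}.

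Next I would introduce the companion quantity $A := \al_0 S(\eta_0) + s\be_0 C(\eta_0)$ and record the rotation identity $A^2 + B^2 = \al_0^2 + s^2\be_0^2$, whose right-hand side is strictly positive by \eqref{albe_nz.eq} and $s > 0$. Because $u$ satisfies the boundary condition, $\Ga(\th,\al,\be) = 0$, which is precisely \eqref{Ga_zeros_lin_a.eq}, namely $A = \sum_{i=1}^m (\al_i S(\eta_i) + s\be_i C(\eta_i))$. Arguing by contradiction, I would assume $B = 0$, giving $|A| = (\al_0^2 + s^2\be_0^2)^{1/2}$. Choosing the same $b_0 \in [0,\pi/2]$ as in Lemma~\ref{single_bc_simple.lem} (so that \eqref{AB_lin_cond_sc.eq} holds with at least one strict inequality), the triangle inequality applied to \eqref{Ga_zeros_lin_a.eq} together with $|S(\eta_i)|, |C(\eta_i)| \le 1$ gives
\[
|A| \le \sum_{i=1}^m |\al_i| + s\sum_{i=1}^m |\be_i| \le C_b\al_0 + s S_b|\be_0|,
\]
and a final application of the Cauchy--Schwarz inequality (using $C_b^2 + S_b^2 = 1$) bounds the right-hand side by $(\al_0^2 + s^2\be_0^2)^{1/2}$. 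Combining these yields $|A| < (\al_0^2 + s^2\be_0^2)^{1/2}$, contradicting $B = 0$.

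The step needing the most care is verifying that the chain of inequalities for $|A|$ is \emph{strict}, since this is what produces the contradiction. This relies on the fact that at least one of the two inequalities in \eqref{AB_lin_cond_sc.eq} is strict and that the factor $s$ multiplying the $\be$-sum is positive; I would check separately the degenerate cases $\al_0 = 0$ (which forces $\al = 0$ and $b_0 = \pi/2$) and $\be_0 = 0$ (which forces $\be = 0$ and $b_0 = 0$), where one inequality in \eqref{AB_lin_cond_sc.eq} is an equality but strictness is supplied by the other. This is the only genuine obstacle; the sign of $\be_0$ causes no difficulty here, since only $|\be_0|$ enters throughout, so — unlike in Lemma~\ref{single_bc_simple.lem} — no separate case split on the sign of $\be_0$ is required.
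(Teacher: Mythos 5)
Your proof is correct and reaches the paper's contradiction by essentially the same strategy -- reduce to the trigonometric form, assume the perpendicularity relation, and contradict the bound supplied by \eqref{AB_lin_cond_sc.eq} -- but the mechanism differs in a small, genuinely useful way. The paper multiplies \eqref{Ga_zeros_lin_a.eq} by $S(\eta_0)$ and by $C(\eta_0)$ and uses \eqref{bcperp.eq} to extract $\al_0 = S(\eta_0)\Sigma$ and $s\be_0 = C(\eta_0)\Sigma$ (where $\Sigma$ is the right-hand sum), then bounds the sign-adapted combination $C_b\al_0 \pm S_b s\be_0$, which forces a separate case for $\be_0 < 0$. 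Your rotation identity $A^2 + B^2 = \al_0^2 + s^2\be_0^2$ plus Cauchy--Schwarz encodes the same information ($B=0$ forces $|A|=|\Sigma|$ to equal its maximal value $(\al_0^2+s^2\be_0^2)^{1/2}$, while the boundary condition and \eqref{AB_lin_cond_sc.eq} force it to be strictly smaller), but since only $|A|$ and $|\be_0|$ appear, the sign split disappears. Your attention to where strictness enters (one strict inequality in \eqref{AB_lin_cond_sc.eq}, preserved by the factor $s>0$, including the degenerate cases $\al_0=0$ or $\be_0=0$) is exactly the point that needs checking, and you handle it correctly.
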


\begin{proof}
The argument is similar to the proof of
Lemma~\ref{single_bc_simple.lem},
and we use the notation from there.
In particular, we suppose that $u$ has the form of $w$ given in
\eqref{s_soln_form.eq}, so that \eqref{single_bc.eq} takes the form
\eqref{Ga_zeros_lin_a.eq},
and to obtain a contradiction we suppose that \eqref{bcnonperp.eq}
fails, that is, with this form of $u$,
\begin{equation}  \label{bcperp.eq}
s \be_0 S(\eta_0)  - \al_0 C(\eta_0) = 0.
\end{equation}
Multiplying  \eqref{Ga_zeros_lin_a.eq} by $S(\eta_0)$ and $C(\eta_0)$,
and using \eqref{bcperp.eq}, yields respectively
\begin{align*}
\al_0
&=
S(\eta_0) \sum_{i=1}^m \big( \al_i S(\eta_i) + \be_i s C(\eta_i) \big) ,
\\
s \be_0
&=
C(\eta_0)
  \sum_{i=1}^m \big( \al_i S(\eta_i) + \be_i s C(\eta_i) \big) .
\end{align*}
If $\be_0 \ge 0$ then
combining these inequalities and using \eqref{AB_lin_cond_sc.eq} yields
\begin{align*}
C_b \al_0 + S_b s \be_0
& =
\big( C_b S(\eta_0) + S_b C(\eta_0) \big)
\sum_{i=1}^m \big( \al_i S(\eta_i) + \be_i s C(\eta_i) \big)
\\ & <
C_b \al_0 + S_b s \be_0 ,
\end{align*}
which is the desired contradiction in this case.
If $\be_0 < 0$ then we simply replace
$C_b \al_0 + S_b s \be_0$
with
$C_b \al_0 - S_b s \be_0$
in the preceding calculation to obtain a similar contradiction.
This completes the proof of Lemma~\ref{bcerp.lem}.
\end{proof}

We also have the following immediate application of
Theorem~\ref{single_bc.thm} to the eigenvalue problem.

\begin{cor}  \label{geom_mult_one.cor}
Suppose that
$(\al_0^\pm,\be_0^\pm,\al^\pm,\be^\pm)$ satisfy
\eqref{albe_nz.eq} and \eqref{AB_lin_cond.eq}.
Then any eigenvalue $\la > 0$ of
\eqref{eval_de.eq},  \eqref{slbc.eq},
has geometric multiplicity one.
\end{cor}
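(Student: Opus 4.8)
The plan is to deduce this corollary almost immediately from Theorem~\ref{single_bc.thm} applied to a single one of the two boundary conditions. Recall that the geometric multiplicity of an eigenvalue $\la > 0$ is, by definition, the dimension of the eigenspace, that is, of the space $E_\la$ of all solutions $u$ of \eqref{eval_de.eq} satisfying \emph{both} of the boundary conditions in \eqref{slbc.eq} (at $x = +1$ and at $x = -1$). The key observation is that this eigenspace is contained in the solution set of the problem consisting of \eqref{eval_de.eq} together with just the single condition at one end point.

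First I would fix attention on the boundary condition at $x = -1$, that is, the condition in \eqref{slbc.eq} with superscript $-$, whose coefficients $(\al_0^-,\be_0^-,\al^-,\be^-)$ satisfy \eqref{albe_nz.eq} and \eqref{AB_lin_cond.eq} by hypothesis. Since every solution of \eqref{eval_de.eq} extends uniquely to a solution of $-u'' = \la u$ on all of $\R$, and this extension gives a linear isomorphism between solutions on $(-1,1)$ and solutions on $\R$, the space $E_\la$ may be identified with a subspace of the solution set of \eqref{single_bc_de.eq}, \eqref{single_bc.eq} taken with these coefficients. Applying Theorem~\ref{single_bc.thm} (with $\la > 0 \ge 0$, and \eqref{albe_nz.eq}, \eqref{AB_lin_cond.eq} holding) shows that this single-condition solution set is one-dimensional.

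Finally, since $E_\la$ is a linear subspace of a one-dimensional vector space, it has dimension either $0$ or $1$. By assumption $\la$ is an eigenvalue, so there exists a non-trivial eigenfunction and hence $E_\la \ne \{0\}$; therefore $\dim E_\la = 1$, which is the asserted geometric multiplicity one.

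I do not anticipate a genuine obstacle here, as the result is essentially a one-line consequence of Theorem~\ref{single_bc.thm}: the entire content is that imposing one of the two Sturm-Liouville-type conditions already cuts the two-dimensional ODE solution space down to one dimension. The only point requiring a moment's care is the identification of the two-endpoint eigenspace on $(-1,1)$ with a subspace of the single-condition solution set on $\R$; this is immediate from uniqueness of solutions of the constant-coefficient equation $-u'' = \la u$ and so needs no separate argument.
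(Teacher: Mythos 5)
Your argument is correct and is exactly how the paper intends the corollary to be read: it is stated there as an ``immediate application'' of Theorem~\ref{single_bc.thm}, with no further proof given, and your write-up simply makes explicit the containment of the eigenspace in the one-dimensional solution set of the single-condition problem (together with the harmless extension of solutions from $(-1,1)$ to $\R$). Nothing is missing.
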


\subsection{Counter examples} \label{vip_counterexamples.sec}

The following example shows that if $\la < 0$ then
Theorem~\ref{single_bc.thm} need not hold.

\begin{ex}  \label{single_condn_la_neg_SL.ex}
Consider \eqref{single_bc_de.eq} with $\la = -1$,
together with the boundary condition
\begin{equation}  \label{single_condn_la_neg_ex_one_SL.eq}
u(-1) + u'(-1) = \al_1 u(0) + \be_2 u'(1).
\end{equation}
that is, with
$\al_0 = \be_0 = 1$, $\be_1 = \al_2 = 0$ and
$\eta_0 = -1$, $\eta_1 = 0$, $\eta_2 = 1$;
we will choose $\al_1$ and $\be_2$ below.
The general solution of equation \eqref{single_bc_de.eq} is
$u(x) = c_+ e^x + c_- e^{-x}$,
for arbitrary $(c_+,c_-) \in \R^2$,
and substituting this solution into the boundary condition
\eqref{single_condn_la_neg_ex_one_SL.eq} yields the equation
\begin{equation}  \label{single_condn_la_neg_ex_two_SL.eq}
c_+ (2 - \al_1 e  - \be_2 e^2)
 -
c_- (\al_1 e^{-1}  - \be_2 e^{-2})
= 0.
\end{equation}
Now, setting
$$
\al_1 = \frac{2}{e(e^2 +1)}, \quad \be_2 = \frac{2}{e^2 +1},
$$
we see that
$(\al_0,\be_0,\al,\be)$ satisfies
\eqref{AB_lin_cond.eq},
and \eqref{single_condn_la_neg_ex_two_SL.eq} holds for all
$(c_+,c_-) \in \R^2$.
Hence, the solution set of this boundary value problem is
two-dimensional, and so Theorem~\ref{single_bc.thm} does not hold in
this case.
\hfill $\square$ \end{ex}

Example~\ref{single_condn_la_neg_SL.ex} can be extended to the
eigenvalue problem to show that Corollary~\ref{geom_mult_one.cor} need
not hold for negative eigenvalues.

\begin{ex}  \label{dble_condn_la_neg_SL.ex}
Consider the multi-point eigenvalue problem consisting of
equation \eqref{single_bc_de.eq}
together with the pair of boundary conditions
\begin{equation}  \label{dble_condn_la_neg_SL.eq}
u(\pm 1) \mp u'(\pm 1) = \al_1 u(0) \mp \be_2 u'(\mp 1) ,
\end{equation}
with $\al_1$ and $\be_2$ as in Example~\ref{single_condn_la_neg_SL.ex}.
It can be verified (as in Example~\ref{single_condn_la_neg_SL.ex})
that $\la = -1$ is an eigenvalue of this boundary value problem with
geometric multiplicity two.
Hence, Corollary~\ref{geom_mult_one.cor} need not hold for negative
eigenvalues.
We observe that both sets of boundary condition coefficients in this
problem satisfy \eqref{AB_lin_cond.eq},
but of course the sign condition \eqref{albe_sign.eq} does not hold
(which allows the negative eigenvalue).
\hfill $\square$ \end{ex}

The final example in this section shows that if $\la < 0$ then
Theorem~\ref{single_bc.thm} need not hold,
even with a Dirichlet-type boundary condition
(that is, with $\be_0 = 0$ and $\be = 0$).
However, this example is not relevant to the eigenvalue problem since
negative eigenvalues do not occur with Dirichlet-type
boundary conditions
(also, in this example $\eta_1 < \eta_0 < \eta_2$,
which is not consistent with the distribution of these points in the
eigenvalue problem).

\begin{ex}  \label{single_condn_la_neg_D.ex}
Consider  \eqref{single_bc_de.eq} with $\la = -1$,
together with the boundary condition
\begin{equation}  \label{single_condn_la_neg_ex_one_D.eq}
u(0) = \frac{e(e^2-1)}{e^4-1} \big( u(-1) + u(1) \big) .
\end{equation}
It can be verified that
\eqref{AB_lin_cond.eq} again holds,
and for arbitrary $(c_+,c_-) \in \R^2$,
the function $u(x) = c_+ e^x + c_- e^{-x}$
satisfies both
\eqref{single_bc_de.eq} and
\eqref{single_condn_la_neg_ex_one_D.eq},
that is the solution set of this boundary value problem is again
two-dimensional.
\hfill $\square$ \end{ex}

\section{The structure of $\si$}  \label{evals.sec}

In this section we discuss the structure of the spectrum of
the multi-point eigenvalue problem
\eqref{eval_de.eq}-\eqref{slbc.eq},
which we can rewrite as
\begin{equation}  \label{mp_eval.eq}
 -\De(u)  = \la u , \quad u \in X .
\end{equation}
We will show that $\si$ has the properties ($\si$-a)-($\si$-d)
described in the introduction, that is, the multi-point spectrum has
similar properties to the spectrum of the standard Sturm-Liouville with
separated boundary conditions.
In particular, we will obtain a characterisation of the
eigenvalues in terms of an oscillation count of the corresponding
eigenfunctions, as in the property ($\si$-d) in the introduction.

The standard method of counting the oscillations of the eigenfunctions
of separated problems is by counting the number of (nodal) zeros in the
interval $(-1,1)$, and it is well known that this approach yields
property ($\si$-d) in this case.
Unfortunately, this need not be true for the multi-point boundary
conditions.
This was first observed in \cite{RYN3}, in the case of a
problem with a single-point Dirichlet condition at one end
point and a multi-point Dirichlet-type
condition at the other end point.
For such a problem it was shown that, for $k \ge 0$,
if $u_k$ is an eigenfunction corresponding to $\la_k$
then $u_k$ could have either $k$ or $k+1$ zeros in $(-1,1)$,
whereas $u_k'$ has exactly $k+1$ zeros in $(-1,1)$
(these zeros of $u_k'$ were were termed `bumps' in  \cite{RYN3}).
The results of \cite{RYN3} were then extended to a similar $p$-Laplacian
problem  in \cite{DR}, and $p$-Laplacian problem with multi-point
Dirichlet-type conditions at both end points in \cite{RYN5}.
Thus, in the  Dirichlet-type case, using nodal zeros to count the
eigenfunction oscillations fails, and in fact the oscillations are best
described by counting bumps
(and by starting the enumeration of the eigenvalues/eigenfunctions at
$k=1$, that is, the first eigenfunction has a single bump).

However, it was then shown in \cite{RYN6} that counting bumps fails in
the case of Neumann and mixed boundary conditions, and in fact in
\cite{RYN3, RYN5, RYN6} a
different oscillation counting procedure was adopted for each of these
three types of boundary conditions, and each of these procedures could
fail when applied to the other problems.
To deal with the general Sturm-Liouville-type boundary conditions here
we will use a Pr\"ufer angle technique to characterise the
oscillation count of the eigenfunctions.
This technique will unify and extend the various types of oscillation
count used previously in \cite{RYN3, RYN5, RYN6}.

In view of this we begin with a preliminary section discussing a
Pr\"ufer angle method of defining an oscillation count for the
multi-point problem.
We then use this oscillation count to describe the multi-point spectrum.

\subsection{Pr\"ufer angles and oscillation count} \label{Neu_nodal.sec}

The Pr\"ufer angle is a standard technique in the theory of ordinary
differential equations, although there are slight variations in
the precise definitions and functions used.
The basic formulation is described in \cite[Chapter 8]{CL}
(although the terminology `Pr\"ufer angle' is not used in \cite{CL}).
However, a more general formulation is described in
\cite[Section~2]{BD} (in a $p$-Laplacian context),
together with some remarks about  various  `modified Pr\"ufer angle'
formulations, and their history.
In fact, we will adopt the  form of the angle used in
\cite[Lemma 2.5]{BD},
which was used earlier by Elbert
(see Remark~\ref{mod_prufer_angle.rem} below for the reason for our
use of this formulation).
We will then see that, in contrast to the separated case,
the multi-point boundary conditions \eqref{slbc.eq} do not determine
the exact values of the Pr\"ufer angle at the end points $\pm 1$,
but instead they place bounds on these angles.

We will give a full description of our constructions and results
relating to the boundary conditions \eqref{slbc.eq} but, for brevity, we
will not describe the basic details of the Pr\"ufer angle technique here
but simply refer to \cite{BD} and \cite{CL}.

Let $\Cs[-1,1]$ denote the set of functions $u \in C^1[-1,1]$
having only simple zeros
(that is, $|u(x)| + |u'(x)| > 0$ for all $x \in [-1,1]$).
For any $\la > 0$ and $u \in \Cs[-1,1]$, we define a `modified'
Pr\"ufer angle function
$\om_{(\la,u)} \in C^0[-1,1]$ by
\begin{equation} \label{pruf_defn.eq}
\om_{(\la,u)}(-1) \in [0,\pi),
\quad
\om_{(\la,u)}(x) := \tan^{-1} \frac {\la^{1/2} u(x)}{u'(x)}, \quad x \in
[-1,1]
\end{equation}
(when $u'(x) = 0$ the value of $\om_{(\la,u)}(x)$ is defined by
continuity).
We note that the standard Pr\"ufer angle does not have the factor
$\la^{1/2}$ in the definition.
Geometrically, for each $x \in [-1,1]$ we can regard $\om_{(\la,u)}(x)$
as the angle between
the vectors $(u'(x),\la^{1/2} u(x))$ and $(1,0)$ in $\R^2$,
defined to vary continuously with respect to $x$
(so $\om_{(\la,u)}(x)$ need not lie within $[0,\pi/2]$, or even within
$[0,2\pi]$).
Clearly, if $u$ is a non-trivial solution of the
differential equation \eqref{eval_de.eq} then $u \in \Cs[-1,1]$,
so $\om_{(\la,u)}$ is well defined.

From now on we suppose that
\eqref{albe_nz.eq} and \eqref{albe_sign.eq} hold,
and we also define the angles
\begin{align*}
\om_{\la,0}^- := - \tan^{-1} \frac {\la^{1/2} \be_0^-}{\al_0^-}
  \in [0,\pi/2] ,
\qquad
\om_{\la,0}^+  := - \tan^{-1} \frac {\la^{1/2} \be_0^+}{\al_0^+}
  \in [\pi/2,\pi] ,
\end{align*}
where the permissible ranges chosen here for the values of
$\om_{\la,0}^\pm$
are consistent with the sign conditions \eqref{albe_sign.eq}.
Geometrically,  $\om_{\la,0}^\pm$
are the angles between
the vectors $(\al_0^\pm, - \la^{1/2} \be_0^\pm)$ and $(1,0)$.

\subsubsection{Suppose that $(\bal,\bbe) = (\bzero,\bzero)$.}
In this case the boundary conditions
\eqref{slbc.eq} reduce to the separated conditions
\begin{equation} \label{slbc_zero.eq}
\la^{-1/2} (u'(\pm 1),\la^{1/2} u(\pm 1)) .
(\la^{1/2} \be_0^\pm,\al_0^\pm) =
\al_0^\pm u(\pm 1)  + \be_0^\pm u'(\pm 1) = 0
\end{equation}
(where the left hand side is the usual dot product of the vectors).
That is, a function $u \in C^1[-1,1]$ satisfies
\eqref{slbc.eq} if and only if
\begin{equation}  \label{vectors_perp.eq}
\text{$(u'(\pm 1),\la^{1/2} u(\pm 1))$ is perpendicular to
$(\la^{1/2} \be_0^\pm,\al_0^\pm)$.}
\end{equation}
Since the vectors
$(\al_0^\pm, - \la^{1/2} \be_0^\pm)$
and
$(\la^{1/2} \be_0^\pm,\al_0^\pm)$
are perpendicular, we see that $u$ satisfies
\eqref{vectors_perp.eq} if and only if
\begin{equation}  \label{vectors_parallel.eq}
\text{$(u'(\pm 1),\la^{1/2} u(\pm 1))$ is parallel to
$(\al_0^\pm, - \la^{1/2} \be_0^\pm)$,}
\end{equation}
which is equivalent to
\begin{equation} \label{pruf_eq.eq}
\om_{(\la,u)}(\pm 1) = \om_{\la,0}^\pm \ (\rm{mod}\ \pi) .
\end{equation}
Standard Sturm-Liouville theory for the
separated boundary conditions \eqref{slbc_zero.eq} now yields the
following properties of the spectrum,
see Theorem~2.1 in  \cite[Chapter~8]{CL}
(and the proof of this theorem).

\begin{thm}  \label{spec_zero.thm}
Suppose that $(\bal,\bbe) = (\bzero,\bzero)$.
Then $\si$ consists of a strictly increasing sequence of real
eigenvalues
$\la_k^\bzero \ge 0$, $k=0,1,\dots.$
For each $k \ge 0$$:$
\begin{mylist}
\item
$\la_k^\bzero$ has geometric multiplicity one$;$
\item
$\la_k^\bzero$ has an eigenfunction $u_k^\bzero$
whose Pr\"ufer angle $\om_k^\bzero := \om_{(\la_k^\bzero,u_k^\bzero)}$
satisfies
\begin{equation} \label{om_efun_zero.eq}
\om_k^\bzero(-1) = \om_{\la,0}^-,
\quad
\om_k^\bzero(1) = \om_{\la,0}^+ + k \pi .
\end{equation}
\end{mylist}
\end{thm}

\begin{remark} \label{oscillations_zero.rem}
By definition, for any $u \in \Cs[-1,1]$,
\begin{align*}
u(x) &= 0 \iff \om_{(\la,u)}(x) = 0\, (\rm{mod}\, \pi) ,
\\
u'(x) &= 0 \iff \om_{(\la,u)}(x) = \frac{\pi}{2}\, (\rm{mod}\, \pi) .
\end{align*}
In addition, it can be verified that if $u$ satisfies the differential
equation \eqref{eval_de.eq}, with $\la > 0$, then
$$
u(x) u'(x) = 0 \implies \om'_{(\la,u)}(x) > 0 ,
$$
so it follows from \eqref{om_efun_zero.eq}
that, for all $k \ge 0$, the eigenfunction $u_k^\bzero$
has exactly $k$ zeros in the interval $(-1,1)$;
this is the usual `oscillation count' for the standard, separated,
Sturm-Liouville problem.
Thus the oscillation count of the eigenfunctions of the separated
problem can be described by the Pr\"ufer angle, and this count is
encapsulated in \eqref{om_efun_zero.eq}.
\end{remark}

\subsubsection{Suppose that
$(\bzero,\bzero) \ne (\bal,\bbe) \in \B(\bal_0,\bbe_0)$.}

In this case the eigenfunctions need not satisfy
\eqref{vectors_parallel.eq}-\eqref{om_efun_zero.eq} --- to provide a
replacement for these formulae we first prove the following lemma.

\begin{lemma} \label{pruf_bnd.lem}
Suppose that $u$ is an eigenfunction, with eigenvalue $\la > 0$.
Then
\begin{equation} \label{pruf_bnd.eq}
\om_{(\la,u)}(\pm 1) - \om_{\la,0}^\pm  \ne \frac{\pi}{2}
\ (\rm{mod}\, \pi) .
\end{equation}
\end{lemma}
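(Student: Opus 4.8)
The plan is to recognise that the forbidden condition in \eqref{pruf_bnd.eq}, namely $\om_{(\la,u)}(\pm 1) - \om_{\la,0}^\pm = \pi/2 \pmod \pi$, is precisely a perpendicularity statement about two explicit planar vectors, and that this perpendicularity is exactly the degenerate case already ruled out by Lemma~\ref{bcerp.lem}. So I expect the lemma to follow almost immediately, once the geometry is unwound.

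First I would translate the angular condition into algebra. By the geometric descriptions given just before the statement, $\om_{(\la,u)}(\pm 1)$ is the angle of the vector $(u'(\pm 1),\la^{1/2} u(\pm 1))$, while $\om_{\la,0}^\pm$ is the angle of $(\al_0^\pm,-\la^{1/2}\be_0^\pm)$; both vectors are nonzero, the first because $u \in \Cs[-1,1]$ and the second by \eqref{albe_nz.eq}. Two nonzero planar vectors have angles differing by $\pi/2 \pmod \pi$ if and only if they are orthogonal, so \eqref{pruf_bnd.eq} \emph{fails} at the end point $\pm 1$ exactly when the dot product vanishes, i.e. when
$$
\al_0^\pm u'(\pm 1) - \la \be_0^\pm u(\pm 1) = 0 ,
$$
which is the same as $\la \be_0^\pm u(\pm 1) - \al_0^\pm u'(\pm 1) = 0$.

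Next I would invoke Lemma~\ref{bcerp.lem} at each end point. An eigenfunction $u$ of the full problem \eqref{eval_de.eq}-\eqref{slbc.eq} satisfies \emph{each} of the two conditions in \eqref{slbc.eq} separately, so taking the condition with superscript $+$ (respectively $-$), the function $u$ is a non-trivial solution of the single-condition problem \eqref{single_bc_de.eq}, \eqref{single_bc.eq} with $\eta_0 = +1$ and $(\al_0,\be_0,\al,\be) = (\al_0^+,\be_0^+,\al^+,\be^+)$ (respectively $\eta_0 = -1$). Since \eqref{albe_nz.eq} and \eqref{AB_lin_cond.eq} hold and $\la > 0$, Lemma~\ref{bcerp.lem} applies and yields $\la \be_0^\pm u(\pm 1) - \al_0^\pm u'(\pm 1) \ne 0$, which is exactly the negation of the vanishing identified above. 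Hence \eqref{pruf_bnd.eq} holds at both end points.

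There is essentially no obstacle here beyond bookkeeping with the sign conventions. The only two things worth checking carefully are, first, that the orthogonality dot product really reproduces the bilinear expression $\la \be_0^\pm u(\pm 1) - \al_0^\pm u'(\pm 1)$ appearing in \eqref{bcnonperp.eq}, and second, that each single-point condition in \eqref{slbc.eq} is a genuine instance of \eqref{single_bc.eq} (noting that a solution of \eqref{eval_de.eq} on $(-1,1)$ extends to a solution of \eqref{single_bc_de.eq} on $\R$), so that Lemma~\ref{bcerp.lem} is applicable at each end point. Both are routine once the two vectors have been identified.
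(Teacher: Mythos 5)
Your proposal is correct and follows essentially the same route as the paper: the paper's proof likewise observes that $\om_{(\la,u)}(\pm 1)-\om_{\la,0}^\pm = \frac{\pi}{2}\ ({\rm mod}\,\pi)$ is equivalent to $\la \be_0^\pm u(\pm 1) - \al_0^\pm u'(\pm 1) = 0$ and then applies Lemma~\ref{bcerp.lem} with $\eta_0 = \pm 1$. Your extra care about the dot product and the applicability of the single-condition lemma at each end point matches the paper's brief ``etc.''
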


\begin{proof}
It follows from the definitions of $\om_{(\la,u)}$ and the angles
$\om_{\la,0}^\pm$
that
$$
\om_{(\la,u)}(\pm 1)-\om_{\la,0}^\pm = \frac{\pi}{2} \ ({\rm mod}\,
\pi)
\iff
\la \be_0^\pm u(\pm 1) - \al_0^\pm u'(\pm 1) = 0 ,
$$
so the result follows from Lemma~\ref{bcerp.lem}
(by putting $\eta_0 = \pm 1$, etc.).
\end{proof}

The geometrical interpretation of \eqref{pruf_bnd.eq} is:
\begin{equation}  \label{vectors_non_parallel.eq}
\text{$(u'(\pm 1),\la^{1/2} u(\pm 1))$
is not perpendicular to
$(\al_0^\pm,-\la^{1/2} \be_0^\pm)$.
}
\end{equation}
Thus we see that going from separated to  multi-point boundary
conditions has relaxed the `strictly parallel' condition
\eqref{vectors_parallel.eq}, holding in the separated case, to the
`not perpendicular' condition \eqref{vectors_non_parallel.eq},
holding in the multi-point case.

Motivated by Theorem~\ref{spec_zero.thm} and Lemma~\ref{pruf_bnd.lem},
we introduce some further notation.

\begin{definition}
For  $k \ge 0$,
$P_k^+$ will denote the set of
$(\la,u) \in (0,\infty) \X \Cs[-1,1]$ for which the Pr\"ufer
angle $\om_{(\la,u)}$ satisfies
\begin{equation} \label{om_efun_gen.eq}
|\om_{(\la,u)}(-1) - \om_{\la,0}^-| < \pi/2 ,
\quad
|\om_{(\la,u)}(1) - \om_{\la,0}^+ - k\pi| < \pi/2 ;
\end{equation}
also, $P_k^- := - P_k^+$ and $P_k :=  P_k^- \cup P_k^+$.
\end{definition}

The sets $P_k^\pm$, $k \ge 0$, are
open, disjoint subsets of $(0,\infty) \X C^1[-1,1]$,
and they will be used to count
eigenfunction oscillations in Theorem~\ref{spec.thm} below.
In fact, the results of Theorem~\ref{spec.thm} below will demonstrate
that,
for general $(\bal,\bbe) \ne (\bzero,\bzero)$,
the conditions
\eqref{pruf_bnd.eq}
and
\eqref{om_efun_gen.eq}
are suitable replacements for conditions
\eqref{pruf_eq.eq}
and
\eqref{om_efun_zero.eq}
respectively.
As a preliminary to this we observe that the above definitions,
together with  Corollary~\ref{geom_mult_one.cor} and
Lemma~\ref{pruf_bnd.lem} yield the following result.

\begin{cor} \label{efun_in_P.cor}
Suppose that $u$ is an eigenfunction, with eigenvalue $\la > 0$.
Then$:$
\begin{mylist}
\item
$\la$ has geometric multiplicity $1;$
\item
$(\la,u) \not\in \pa P_l$, for any  $l
\ge 0;$
\item
there exists  $k \ge 0$ such that $(\la,u) \in P_k$.
\end{mylist}
\end{cor}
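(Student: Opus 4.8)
The plan is to dispatch the three claims in order, using the results already established. Claim (i) is immediate: it is exactly the content of Corollary~\ref{geom_mult_one.cor}, since $\la>0$. For claim (ii), I would unwind the definition of the boundary $\pa P_l$. The set $P_l = P_l^- \cup P_l^+$ is described by the strict inequalities in \eqref{om_efun_gen.eq} (for $P_l^+$) together with their negatives (for $P_l^-$), so a point $(\la,u)$ lies on $\pa P_l$ precisely when at least one of these inequalities becomes an equality, i.e.\ when
$$
\om_{(\la,u)}(\pm 1) - \om_{\la,0}^\pm = \pm \tfrac{\pi}{2} \ (\rm{mod}\, \pi)
$$
at one of the end points (the shift $k\pi$ at $+1$ is absorbed mod $\pi$). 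But Lemma~\ref{pruf_bnd.lem} asserts exactly that, for an eigenfunction $u$ with eigenvalue $\la>0$, neither of these equalities can hold. Hence $(\la,u)\not\in\pa P_l$ for every $l\ge 0$.

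For claim (iii), the idea is that once we know $(\la,u)$ avoids all the boundaries $\pa P_l$, it must lie in the interior of exactly one of the sets $P_k^\pm$. Concretely, the Pr\"ufer angle at the left endpoint, $\om_{(\la,u)}(-1)$, lies in $[0,\pi)$ by the normalisation in \eqref{pruf_defn.eq}, and by Lemma~\ref{pruf_bnd.lem} it differs from $\om_{\la,0}^-$ by something not congruent to $\pi/2$ mod $\pi$; this forces $|\om_{(\la,u)}(-1)-\om_{\la,0}^-|<\pi/2$ (after possibly replacing $u$ by $-u$, which interchanges $P_k^+$ and $P_k^-$), giving the left-hand condition in \eqref{om_efun_gen.eq}. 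Then the value $\om_{(\la,u)}(1)-\om_{\la,0}^+$ is some real number that, again by Lemma~\ref{pruf_bnd.lem}, is not congruent to $\pi/2$ mod $\pi$, so it falls strictly between two consecutive odd multiples of $\pi/2$; that is, there is a unique integer $k$ with $|\om_{(\la,u)}(1)-\om_{\la,0}^+ - k\pi|<\pi/2$. To see that this $k$ is nonnegative one uses the monotonicity fact recorded in Remark~\ref{oscillations_zero.rem}, namely $\om'_{(\la,u)}(x)>0$ whenever $u(x)u'(x)=0$, which prevents the angle from winding backwards and keeps $\om_{(\la,u)}(1)$ above $\om_{(\la,u)}(-1)$; combined with the admissible ranges $\om_{\la,0}^-\in[0,\pi/2]$ and $\om_{\la,0}^+\in[\pi/2,\pi]$ this yields $k\ge 0$. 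Thus $(\la,u)\in P_k$.

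The main obstacle I anticipate is the bookkeeping in claim (iii): matching the $\pm$ sign convention (choosing between $P_k^+$ and $P_k^-$ via $u\mapsto -u$) and verifying that the integer $k$ produced at the right endpoint is genuinely nonnegative. The first is a normalisation nuisance rather than a real difficulty, but the second requires care, since it rests on the Pr\"ufer angle being monotone at its critical points and on the endpoint angles $\om_{\la,0}^\pm$ lying in the prescribed ranges. The rest of the argument is essentially a direct translation of Lemma~\ref{pruf_bnd.lem} into the language of the sets $P_k^\pm$, so I expect claims (i) and (ii) to be one-line consequences of earlier results.
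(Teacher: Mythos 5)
Your proposal matches the paper's own treatment: the paper offers no separate proof of this corollary, merely observing that it follows from the definitions of the sets $P_k^\pm$ together with Corollary~\ref{geom_mult_one.cor} (giving part (i)) and Lemma~\ref{pruf_bnd.lem} (giving parts (ii) and (iii)), which is precisely the route you take. Your elaboration of part (iii) --- the pigeonhole on integers once the angle differences are known to avoid $\pi/2$ mod $\pi$, and the $k\ge 0$ bookkeeping via monotonicity of the modified Pr\"ufer angle (which for $-u''=\la u$ in fact satisfies $\om_{(\la,u)}'\equiv\la^{1/2}>0$) --- supplies more detail than the paper itself records.
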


Motivated by Corollary~\ref{efun_in_P.cor} we define the sets
$$
\si_k := \{ \la \in \si :
\text{for any eigenfunction $u$ of $\la$, $(\la,u) \in P_k$}\},
\quad k \ge 0.
$$
By Corollary~\ref{efun_in_P.cor}, we have $\si = \cup_{k \ge 0}
\,\si_k$.

\begin{remark}  \label{oscillations_gen.rem}
In \cite{RYN5} and \cite{RYN6} certain subsets of
$\Cs[-1,1]$, denoted $T_k$ and $S_k$, were used to count oscillations in
the Dirichlet-type and Neumann-type
cases respectively.
It follows from the results in Remark~\ref{oscillations_zero.rem}
and the definitions of $T_k$ and $S_k$ in
\cite[Section~2.2]{RYN5} and \cite[Section~2.2]{RYN6}
that, for each integer $k \ge 0$:
\begin{itemize}
\item
Neumann-type case: \quad
$\om_{\la,0}^\pm = \frac{\pi}{2}$
and\\[.5 ex]
$
(\la,u) \in P_k \implies \text{$u$ has exactly
$k$ zeros in $(-1,1)$
and $u \in S_k$;}
$
\smallskip
\item
Dirichlet-type case:  \quad
$\om_{\la,0}^- = 0$, \
$\om_{\la,0}^+ = \pi$
and\\[.5 ex]
$
(\la,u) \in P_k \implies \text{$u'$ has exactly
$k+1$ zeros in $(-1,1)$
and $u \in T_{k+1}$.}
$
\smallskip
\end{itemize}
Hence,
in the Dirichlet-type and Neumann-type cases respectively,
the sets $P_k$ used here are analogous to the sets
$(0,\infty) \X T_{k+1}$ and $(0,\infty) \X S_k$,
and we see that using the sets $P_k$ to count the eigenfunction
oscillations extends the oscillation counting methods used in the
above special cases to the general Sturm-Liouville-type boundary
conditions considered here.
\end{remark}

\begin{remark}  \label{mod_prufer_angle.rem}
The above constructions
depended on \eqref{bcnonperp.eq},
via Lemma~\ref{pruf_bnd.lem},
and the occurrence of the term $\la^{1/2}$ in
\eqref{bcnonperp.eq} dictated that the
term $\la^{1/2}$ should appear in the definition of the Pr\"ufer angle.
This is why we have used the `modified' Pr\"ufer angle here.
\end{remark}

\subsection{The structure of  $\si$}   \label{spec.sec}

We can now prove the following theorem for general $(\bal,\bbe)$,
which extends Theorem~\ref{spec_zero.thm} to the
general multi-point Sturm-Liouville problem.

\begin{thm}  \label{spec.thm}
Suppose that
\eqref{albe_nz.eq}-\eqref{AB_lin_cond.eq}
hold.
Then $\si$ consists of a strictly increasing sequence of real
eigenvalues
$\la_k \ge 0$, $k=0,1,\dots,$
such that $\lim_{k \to \infty} \la_k = \infty.$
For each $k \ge 0$$:$
\begin{mylist}
\item
$\la_k$ has geometric multiplicity $1;$
\item
$\la_k$ has an eigenfunction $u_k$
such that $(\la_k,u_k) \in P_k^+$.
\end{mylist}
In the Neumann-type case $\la_0 = 0$, while if
\eqref{al_pm_strict_pos.eq} holds then $\la_0 > 0$.
\end{thm}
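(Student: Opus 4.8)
The plan is to prove this by a continuation (homotopy) argument in the parameters $(\bal,\bbe)$, starting from the separated case $(\bzero,\bzero)$ where everything is known from Theorem~\ref{spec_zero.thm}, and deforming to the given $(\bal,\bbe) \in \B(\bal_0,\bbe_0)$. The key point is that $\B(\bal_0,\bbe_0)$ is convex, hence connected and path-connected, so I can join $(\bzero,\bzero)$ to $(\bal,\bbe)$ by a path lying entirely within the region where hypotheses \eqref{albe_nz.eq}--\eqref{AB_lin_cond.eq} hold. Along this path I want to track the eigenvalues $\la_k$ and show that none can be created, destroyed, collide, or escape the oscillation class $P_k^+$. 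The tools for this are exactly the lemmas already established: Corollary~\ref{geom_mult_one.cor} (every positive eigenvalue is geometrically simple) and Corollary~\ref{efun_in_P.cor}, which says every eigenpair $(\la,u)$ with $\la>0$ lies in some $P_k$ and never on a boundary $\pa P_l$.

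First I would set up the eigenvalue-counting function via the Pr\"ufer angle. For each $\la>0$, solve \eqref{eval_de.eq} with the left-endpoint condition encoded as an initial Pr\"ufer angle determined by the $-$ boundary condition, integrate across $[-1,1]$, and examine whether the right-endpoint value $\om_{(\la,u)}(1)$ meets the right-hand condition; the precise meeting is detected by the membership $(\la,u)\in P_k^+$. As $\la$ increases from $0$ to $\infty$, the terminal Pr\"ufer angle is strictly increasing and sweeps through each target value $\om_{\la,0}^+ + k\pi$ exactly once, because Remark~\ref{oscillations_zero.rem} gives $\om'_{(\la,u)}>0$ at every zero of $u$ or $u'$, forcing monotone passage through each multiple of $\pi/2$. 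This monotonicity is what yields the strictly increasing sequence, the simplicity via geometric multiplicity one, and the divergence $\la_k\to\infty$. The assignment of the integer label $k$ comes from counting how many half-turns the angle has made, i.e. from the defining inequalities \eqref{om_efun_gen.eq} of $P_k^+$.

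The hard part will be establishing that the boundary conditions at the two endpoints are simultaneously satisfiable for a discrete increasing sequence of $\la$, and that the correct oscillation index is preserved. Unlike the separated case, the multi-point conditions do not pin the Pr\"ufer angle to an exact value but only to the open set described by \eqref{vectors_non_parallel.eq}; the crucial input is Lemma~\ref{pruf_bnd.lem}, guaranteeing the eigenpair never sits at the forbidden perpendicular angle $\pi/2 \pmod \pi$ relative to $\om_{\la,0}^\pm$. This is precisely what keeps $(\la,u)$ off $\pa P_k^+$, so that as the parameters vary continuously the index $k$ cannot jump: the sets $P_k^\pm$ are open and disjoint, and an eigenpair can never cross from one to another without passing through a boundary, which is excluded. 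Combining the homotopy invariance of the index with the monotone-sweep argument then forces, for each $k\ge 0$, exactly one eigenvalue $\la_k$ with eigenfunction in $P_k^+$.

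Finally I would dispose of $\la_0$. In the Neumann-type case the constant function solves \eqref{eval_de.eq}--\eqref{slbc.eq} with $\la=0$, giving $\la_0=0$ directly. Otherwise, assuming \eqref{al_pm_strict_pos.eq}, Theorem~\ref{De_inverse.thm} shows $\De$ is invertible, so $0$ is not an eigenvalue, and hence $\la_0>0$; this handles the last sentence of the statement. I expect the continuation/index-preservation step to be the main obstacle, since it requires carefully combining the implicit function theorem (justified by simplicity from Corollary~\ref{geom_mult_one.cor}) with a global count, much as in the single-condition argument for Theorem~\ref{single_bc.thm}, but now with two coupled endpoint conditions rather than one.
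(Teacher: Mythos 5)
Your overall strategy (continuation in $(\bal,\bbe)$ from the separated case, with Corollary~\ref{geom_mult_one.cor}, Corollary~\ref{efun_in_P.cor} and Lemma~\ref{pruf_bnd.lem} preventing collisions and index jumps, and the final disposal of $\la_0$ via Theorem~\ref{De_inverse.thm}) matches the paper's. But there is a genuine gap in how you propose to get \emph{existence} of an eigenvalue in each class $P_k^+$. Your second paragraph is a shooting argument: fix the initial Pr\"ufer angle at $x=-1$ from the left boundary condition, integrate, and watch the terminal angle sweep monotonically through the targets $\om_{\la,0}^+ + k\pi$ as $\la$ increases. This is exactly what fails for multi-point problems: the condition at $-1$ is non-local, involving $u(\eta_i^-)$ and $u'(\eta_i^-)$ at interior points, so it does not determine an initial angle $\om_{(\la,u)}(-1)$ --- it only constrains the pair $(\la,\th)$ implicitly through $\Ga^-(s,\th,\al^-,\be^-)=0$. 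Moreover, your monotonicity claim rests on Remark~\ref{oscillations_zero.rem}, which concerns $\om'_{(\la,u)}(x)$, the derivative in $x$, and says nothing about how the terminal angle varies with $\la$ when the admissible $\th$ is itself varying with $\la$. No such monotone sweep is proved (or needed) in the paper.

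What actually closes the existence argument is the analysis of the coupled system $\Ga^\pm(s,\th,\al^\pm,\be^\pm)=0$ for $(s,\th)$: one needs the sign information $\nu\,\Ga^\nu_s\,\Ga^\nu_\th>0$ at zeros (Lemma~\ref{simple.lem}, itself proved by a sub-continuation in $t\mapsto(t\al^\nu,t\be^\nu)$), which forces the Jacobian determinant $J(s,\th,\bal,\bbe)$ to be nonzero at every simultaneous zero, so the implicit function theorem applies to the \emph{pair} of conditions. Geometric simplicity of the eigenvalue, which is what you invoke, is not sufficient for this transversality. One also needs a priori bounds $0<\de\le s\le C$ along the continuation branch (Lemma~\ref{s_bdd.lem}), proved by separate asymptotic arguments using \eqref{AB_lin_cond.eq} as $s\to0$ and the membership in $P_{\tk}$ as $s\to\infty$, to guarantee the branch extends over the whole homotopy interval and connects each $(s_k^\bzero,\th_k^\bzero)$ to exactly one solution at the target parameters. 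Neither of these two ingredients appears in your proposal, and without them the continuation could a priori terminate before reaching $(\bal,\bbe)$, or fail to be locally unique. Finally, the strict ordering $\la_k<\la_{k+1}$ comes from combining the separated-case ordering with Theorem~\ref{single_bc.thm} (equality would force geometric multiplicity two) and continuity, not from a monotone sweep.
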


\begin{proof}

We will prove a series of results regarding the eigenvalues
and eigenfunctions,
which culminate in the proof of the theorem.
The fact that the eigenvalues have geometric multiplicity 1 has already
been proved in Corollary~\ref{geom_mult_one.cor}.

\begin{lemma}  \label{la_ge_zero.lem}
If $\la$ is an eigenvalue then $\la \ge 0$.
If  \eqref{al_pm_strict_pos.eq} holds then $\la > 0$.
\end{lemma}

\begin{proof}
Suppose that $\la < 0$ and define $s := \sqrt{-\la}$.
Then any eigenfunction $u$
has the form $u(x) = c_+ e^{sx} + c_- e^{-sx}$,
for some $(c_+,c_-) \in \R^2$,
and we see from this that
$\max |u|$ and  $\max |u'|$
must both be attained at the same end point, say at $x=1$.
Hence, $u(1)$ and $u'(1)$ have the same sign.
By \eqref{albe_sign.eq}, $\be_0^+ \ge 0$, so by
\eqref{slbc.eq} and \eqref{AB_lin_cond.eq},
\begin{align*}
\al_0^+ |u|_0  + \be_0^+ |u'|_0
&=
|\al_0^+ u(1)  + \be_0^+ u'(1)|
\\ & \le
|u|_0  \sum_{i=1}^{m^+} |\al^+_i |
 +
|u'|_0 \sum_{i=1}^{m^+} |\be^+_i|
\\ & <
\al^+_0 |u|_0 + \be^+_0 |u'|_0  ,
\end{align*}
and this contradiction proves the first part of the lemma.
Next, if  \eqref{al_pm_strict_pos.eq} holds then it follows from
Theorem~\ref{De_inverse.thm} that $\la \ne 0$, which completes the
proof.
\end{proof}

\begin{remark}  \label{la_ge_zero.rem}
It is well known that if the sign conditions \eqref{albe_sign.eq} do not
hold then
Lemma~\ref{la_ge_zero.lem} need not be true, even in the separated
case.
For example, if
$$
\al_0^\pm = \pm \ep, \quad \al = 0,
\qquad
\be_0^\pm = \pm 1, \quad \be = 0 .
$$
\end{remark}

The properties of the spectrum in the Neumann-type case have been
proved in \cite{RYN6}, so from now on in the proof we will suppose that
\eqref{al_pm_strict_pos.eq} holds.
Thus, by
Theorem~\ref{De_inverse.thm} and Lemma~\ref{la_ge_zero.lem},
if $\la$ is an eigenvalue with eigenfunction $u$, then $\la > 0$
and we may suppose that
$\la = s^2 ,$ $u = w(s,\th)$,
for suitable $s > 0$, $\th \in \R$
(up to a scaling of the eigenfunction),
where $w(s,\th)$ was defined in \eqref{s_soln_form.eq}.
Defining functions
$\Ga^\pm : (0,\infty) \X \R \X \R^{2(m^-+m^+)} \to \R$
by
\begin{equation*}
\begin{split}
\Ga^\pm(s,\th,\al^\pm,\be^\pm)  &:=
\al_0^\pm \sin(\pm s+\th)  + s \be_0^\pm \cos'(\pm s+\th)\, -
\\
& \quad - \sum^{m^\pm}_{i=1} \al_i^\pm \sin(s\eta_i^\pm+\th)
 - s \sum^{m^\pm}_{i=1} \be_i^\pm \cos(s\eta_i^\pm+\th) ,
\end{split}
\end{equation*}
and substituting $w(s,\th)$ into \eqref{slbc.eq}
shows that
$\la = s^2$
is an eigenvalue iff
the pair of equations
\begin{equation}   \label{simul_zeros.eq}
\Ga^\pm(s,\th,\al^\pm,\be^\pm) = 0
\end{equation}
holds, for some $\th \in \R$.
Hence, it suffices to consider the set of solutions of
\eqref{simul_zeros.eq}.

We will now prove Theorem~\ref{spec.thm} by continuation with respect
to $(\bal,\bbe)$, away from
$(\bal,\bbe)=(\bzero,\bzero)$, where the required information on the
solutions of \eqref{simul_zeros.eq} follows from the standard theory of
the separated problem in  Theorem~\ref{spec_zero.thm}.
For reference, we state this in the following lemma.

\begin{lemma}  \label{Ga_zeros_at_zero.lem}
Suppose that $(\bal,\bbe) = (\bzero,\bzero)$.
For each $k = 0,1,\dots,$
if we write $s_k^\bzero := (\la_k^\bzero)^{1/2}$
$($where $\la_k^\bzero$ is as in
Theorem~\ref{spec_zero.thm}$)$,
then there exists a unique
$\th_k^\bzero \in [0,\pi)$ such that
$(s_k^\bzero,\th_k^\bzero)$
satisfies \eqref{simul_zeros.eq}.
\end{lemma}

Of course, by the periodicity properties of $\Ga^\pm$ with respect to
$\th$, there are other solutions of \eqref{simul_zeros.eq}
(with $(\bal,\bbe) = (\bzero,\bzero)$)
than those in Lemma~\ref{Ga_zeros_at_zero.lem},
but these do not yield distinct solutions of
the eigenvalue problem \eqref{mp_eval.eq}.
In fact, to remove these extra solutions and to reduce the domain of
$\th$ to a compact set, from now on we will regard $\th$ as lying in
the circle obtained from the interval $[0,2\pi]$ by identifying the
points $0$ and $2\pi$,
which we denote by $S^1$,
and we regard the domain of the functions $\Ga^\pm$ as
$(0,\infty) \X S^1 \X \B(\al_0^\pm,\be_0^\pm)$.

We now consider \eqref{simul_zeros.eq}  when
$(\bal,\bbe) \ne (\bzero,\bzero)$.
The following proposition provides some information on the signs of the
partial derivatives $\Ga^\nu_s$, $\Ga^\nu_\th$ at the zeros of
$\Ga^\nu$.

\begin{lemma} \label{simple.lem}
Suppose that
$\nu \in \{\pm\}$
and
$(\al^\nu,\be^\nu) \in \B(\al_0^\nu,\be_0^\nu)$.
Then
\begin{equation}   \label{nu_Ga_zero_pos.eq}
\Ga^\nu(s,\th,\al^\nu,\be^\nu) = 0
  \implies
\nu  \, \Ga^\nu_s(s,\th,\al^\nu,\be^\nu) \,
\Ga^\nu_\th(s,\th,\al^\nu,\be^\nu) > 0.
\end{equation}
\end{lemma}

\begin{proof}

By a similar proof to that of Lemma~\ref{single_bc_simple.lem}
it can be shown that
\begin{equation}   \label{Ga_ab_zero_implies.eq}
\Ga^\nu(s,\th,\al^\nu,\be^\nu) = 0
\implies
\Ga^\nu_s(s,\th,\al^\nu,\be^\nu) \, \Ga^\nu_\th(s,\th,\al^\nu,\be^\nu)
\ne 0 .
\end{equation}
We now regard $(s,\th,\al^\nu,\be^\nu)$ as fixed,
and consider the equation
\begin{equation}  \label{Gz_eq_z.eq}
G(\tth,t) := \Ga^\nu(s,\tth,t \al^\nu,t \be^\nu) = 0,
\quad (\tth ,\ t) \in S^1 \X [0,1].
\end{equation}
It is clear that if $t \in [0,1]$ then
$(t\al^\nu,t\be^\nu) \in \B(\al_0^\nu,\be_0^\nu)$,
so by \eqref{Ga_ab_zero_implies.eq},
\begin{equation}  \label{Gz_th.eq}
G(\th,1) = 0 \quad \text{and} \quad
G(\tth,t)  = 0 \implies G_{\tth}(\tth,t)  \ne 0 .
\end{equation}
Hence, by \eqref{Gz_th.eq}, the implicit function theorem, and the
compactness of $S^1$,
there exists a $C^1$ solution function
$
t \to \tth(t) : [0,1] \to S^1 ,
$
for \eqref{Gz_eq_z.eq}
such that
$$
\tth(1) = \th, \quad
\Ga^\nu(s,\tth(t),t\al^\nu,t \be^\nu) = 0 , \quad
t \in [0,1]
$$
(the local existence of this solution function, near $t=1$, is trivial;
standard arguments show that its domain can be extended to include the
interval $[0,1]$ --- see the
proof of part~(b) of Lemma~\ref{s_bdd.lem} below for a similar
argument).

Next, by the definition of $\Ga^\nu$,
\eqref{nu_Ga_zero_pos.eq} holds
at $(s,\tth(0),0,0)$
and hence, by \eqref{Ga_ab_zero_implies.eq} and continuity,
\eqref{nu_Ga_zero_pos.eq} holds at
$(s,\tth(t),t \al^\nu,t \be^\nu)$ for all $t \in [0,1]$.
In particular, putting $t=1$ shows that \eqref{nu_Ga_zero_pos.eq} holds
at $(s,\th,\al^\nu,\be^\nu)$, which completes the proof
of Lemma~\ref{simple.lem}.
\end{proof}

We now return to the pair of equations \eqref{simul_zeros.eq}.
To solve these using the implicit function theorem we define the
Jacobian determinant
$$
J(s,\th,\bal,\bbe) :=
\begin{vmatrix}
\Ga^-_s(s,\th,\al^-,\be^-)  & \Ga^-_\th(s,\th,\al^-,\be^-)
\\[1 ex]
\Ga^+_s(s,\th,\al^+,\be^+)  & \Ga^+_\th(s,\th,\al^+,\be^+)
\end{vmatrix},
$$
for
$(s,\th,\bal,\bbe) \in (0,\infty) \X S^1 \X \B(\bal_0,\bbe_0).$
It follows from the sign properties of
$\Ga^\pm_s,\ \Ga^\pm_\th$
proved in Lemma~\ref{simple.lem} that
\begin{equation}  \label{Gz_Jnz.eq}
\Ga^+(s,\th,\al^+,\be^+) = \Ga^-(s,\th,\al^-,\be^-) = 0
\implies  J(s,\th,\bal,\bbe) \ne 0 ,
\end{equation}
and hence we can solve \eqref{simul_zeros.eq} for $(s,\th)$,
as functions of $(\bal,\bbe)$,
in a neighbourhood of an arbitrary solution of \eqref{simul_zeros.eq}.

Now suppose that
$(s,\th,\bal,\bbe) \in (0,\infty) \X S^1 \X
\B(\bal_0,\bbe_0)$
is an arbitrary (fixed) solution of \eqref{simul_zeros.eq}.
By \eqref{Gz_Jnz.eq} and the implicit function theorem
there exists a maximal open interval $\tI$ containing $1$ and a $C^1$
solution function
$$
t \to (\ts(t),\tth(t)) : \tI \to (0,\infty) \X S^1,
$$
such that
$$
(\ts(1),\tth(1)) = (s,\th),
\quad \Ga^\pm(\ts(t),\tth(t),t\al^\pm,t\be^\pm) = 0 ,
\quad t \in \tI .
$$
Furthermore, by Corollary~\ref{efun_in_P.cor}  and continuity,
there exists an integer $\tk \ge 0$ such that
\begin{equation}  \label{kz_defn.eq}
(\ts(t)^2,w(\ts(t),\tth(t))) \in P_{\tk}  ,
\quad t \in \tI .
\end{equation}

\begin{lemma}  \label{s_bdd.lem}
$(a)$\
There exists constants $C$,  $\de > 0$
such that
$\de \le \ts(t) \le C$, $t \in \tI;$
\\
$(b)$\
$0 \in \tI$.
\end{lemma}

\begin{proof}
(a)\ \
From the form of $w(s,\th)$, there exists
$C > 0$ such that
if $s \ge C$ then $(s^2,w(s,\th)) \not\in P_{\tk}$, for any $\th \in
S^1$.
Hence, by \eqref{kz_defn.eq}, $\ts(t) \le C$ for any $t \in \tI$.
Now suppose that the lower bound $\de>0$ does not exist,
so that we may choose a sequence $t_n \in \tI$, $n = 1,2,\dots,$ with
$\ts(t_n) \to 0.$
Writing $\ts_n := s(t_n)$, $\tth_n := \th(t_n)$ and
$\tw_n := w(\ts_n,\tth_n)$, $n = 1,2,\dots,$
it is clear that, as $n \to \infty$,
$$
\text{$|\tw'_n|_0 = {\rm O}(\ts_n)$
\quad  and  \quad
$|\tw_n - c_\infty|_0 \to 0$,}
$$
for some constant $c_\infty$
(after taking a subsequence if necessary,
and regarding $c_\infty$ as an element of $C^0[-1,1]$).
We now consider various cases.

Suppose that $c_\infty \ne 0$.
By \eqref{al_pm_strict_pos.eq},
$\al_0^\nu \ne 0$ for some $\nu \in \{\pm\}$,
and the corresponding boundary condition \eqref{slbc.eq} yields
$$
0 = \al_0^\nu \tw_n(\nu) - \sum^{m^\nu}_{i=1} \al^\nu_i
\tw_n(\eta^\nu_i)
 + {\rm O}(\ts_n)
\to c_\infty \Big(  \al_0^\nu - \sum^{m^\nu}_{i=1} \al^\nu_i \Big) ,
$$
which contradicts \eqref{AB_lin_cond.eq},
and so proves the existence of $\de > 0$  in this case.

Now suppose that $c_\infty = 0$.
Without loss of generality we also suppose that
$\tth_n \searrow 0$
(after taking a subsequence if necessary)
and so,
for all $n$ sufficiently large,
$|\tw_n|_0$ is attained at the end point $x = 1$.

Suppose that $\al_0^+ \ne 0$.
By the definition of $\tw_n$, we obtain from  \eqref{slbc.eq}
\begin{align*}
&  \ts_n
\Big(
\al_0^+ - \sum^{m^+}_{i=1} \al^+_i \eta^+_i
+
\be_0^+ - \sum^{m^+}_{i=1} \be^+_i
\Big)
 +
\tth_n
\Big(
\al_0^+ - \sum^{m^+}_{i=1} \al^+_i
\Big)
= {\rm O}(\ts_n^3 + \tth_n^3) ,
\end{align*}
but, by \eqref{albe_nz.eq}-\eqref{AB_lin_cond.eq}, the terms in the
brackets on the left hand side are strictly positive, so this is
contradictory when $n$ is sufficiently large.

Suppose that $\al_0^+ = 0$,
and so $\be_0^+ > 0$
(by \eqref{albe_nz.eq}, \eqref{albe_sign.eq}).
Dividing \eqref{slbc.eq} by $\ts_n$ and letting $n \to \infty$ yields
$$
0 = s_n^{-1} \Big(
\be_0^+ \tw_n'(1) - \sum^{m^+}_{i=1} \be^+_i \tw_n'(\eta^+_i) \Big)
\to \be_0^+ - \sum^{m^+}_{i=1} \be^+_i >0 ,
$$
by \eqref{AB_lin_cond.eq},
which is again contradictory.
This completes the proof of part (a) of Lemma~\ref{s_bdd.lem}.

\noindent
(b)\ \
Suppose that $0 \not\in \tI$,
and let $\hat t = \inf \{ t \in \tI\} \ge 0$.
By part (a) of the lemma, there exists a  sequence
$t_n \in \tI$, $n = 1,2,\dots,$
and a point $(\hat s,\hat \th) \in  (0,\infty) \X S^1$,
such that
$$
\lim_{n \to \infty} t_n = \hat t ,
\quad
\lim_{n \to \infty} (\ts(t_n),\tth(t_n)) = (\hat s,\hat \th) .
$$
Clearly, the  point
$(\hat s,\hat \th,\hat t\bal,\hat t\bbe)$
satisfies \eqref{simul_zeros.eq} so,
by the above results,
the solution function $(\ts,\tth)$ extends to an open neighbourhood of
$\hat t$,
which contradicts the choice of $\hat t$ and the maximality of the
interval $\tI$.
\end{proof}

For any given $(\bal,\bbe) \in \B(\bal_0,\bbe_0)$
the above arguments have shown that:
\begin{mylist}
\item
any solution
$(s,\th,\bal,\bbe) \in (0,\infty) \X S^1 \X  \B(\bal_0,\bbe_0)$
of \eqref{simul_zeros.eq} can be continuously connected to exactly one
of the solutions
$\{(s_k^\bzero,\th_k^\bzero,\bzero,\bzero) : k \ge 0\}$.
\end{mylist}
Similar arguments show that:
\begin{mylist}
\item[(b)]
any  solution
$\{(s_k^\bzero,\th_k^\bzero,\bzero,\bzero) : k \ge 0\}$
can be continuously connected to exactly one solution,
say
$(s_k(\bal,\bbe),\th_k(\bal,\bbe),\bal,\bbe) \in (0,\infty) \X S^1 \X
\B(\bal_0,\bbe_0)$,
of \eqref{simul_zeros.eq}.
\end{mylist}
Hence, for each $k \ge 0$,
we obtain the eigenvalue and eigenfunction
$$
(\la_k(\bal,\bbe),u_k(\bal,\bbe))  :=
(s_k(\bal,\bbe)^2, w(s_k(\bal,\bbe),\th_k(\bal,\bbe))) \in P_k ,
$$
and we see that there is no eigenvalue $\tla \ne \la_k(\bal,\bbe)$,
with eigenfunction $\tu$, for which $(\tla,\tu) \in P_k$.

Next, by Theorem~\ref{spec_zero.thm},
$s_k^\bzero = s_k^\bzero(\bzero,\bzero) <
s_{k+1}^\bzero = s_{k+1}^\bzero(\bzero,\bzero)$
and by Theorem~\ref{single_bc.thm},
$s_k(\bal,\bbe) \ne s_{k+1}(\bal,\bbe)$
for any $(\bal,\bbe) \in \B(\bal_0,\bbe_0)$,
so it follows from the continuation construction that
$s_k(\bal,\bbe) < s_{k+1}(\bal,\bbe)$
for all $(\bal,\bbe) \in \B(\bal_0,\bbe_0)$.

Finally, for fixed $(\bal,\bbe)$,
the fact that
$(\la_k(\bal,\bbe),u_k(\bal,\bbe)) \in P_k$, for $k \ge 1$,
shows that as $k \to \infty$ the oscillation count tends to $\infty$,
so by standard properties of the differential equation
\eqref{eval_de.eq}
we must have
$\lim_{k\to\infty} \la_k = \infty$.
This concludes the proof of Theorem~\ref{spec.thm}.
\end{proof}

The implicit function theorem construction of $\la_k$ and $u_k$ in the
proof of Theorem~\ref{spec.thm} also imply continuity properties
which will be useful below, so we state these in the following corollary
(continuity of $u_k$ will be in the space $C^0[-1,1]$, although stronger
results could easily be obtained).

\begin{cor}  \label{cts_evals.cor}
For each $k \ge 0$, $\la_k \in \R$ and $u_k \in C^0[-1,1]$ depend
continuously on
$(\bal_0,\bbe_0,\bal,\bbe,\bfeta) \in
\B \X (-1,1]^{m^-} \X [-1,1)^{m^+}$.
\end{cor}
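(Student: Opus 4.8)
The plan is to read off the continuity of $\la_k$ and $u_k$ directly from the implicit function theorem construction already carried out in the proof of Theorem~\ref{spec.thm}, now treating \emph{all} of the coefficients $(\bal_0,\bbe_0,\bal,\bbe,\bfeta)$ as variables rather than only $(\bal,\bbe)$. The first observation is that the functions $\Ga^\pm$ are jointly $C^1$ (indeed smooth) in all of their arguments $s,\th$ together with $\al_0^\pm,\be_0^\pm,\al^\pm,\be^\pm,\eta^\pm$, since they are finite combinations of $\sin$ and $\cos$ of affine functions of these variables. In particular the dependence on $\bfeta$ is smooth throughout the domain $(-1,1]^{m^-}\X[-1,1)^{m^+}$, on which the exclusions $\eta_i^\pm\ne\pm1$ hold, so no degeneracy of $\Ga^\pm$ arises there.

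Next I would fix an arbitrary parameter point $p_0=(\bal_0,\bbe_0,\bal,\bbe,\bfeta)\in\B\X(-1,1]^{m^-}\X[-1,1)^{m^+}$ and write $\la_k(p_0)=s_k(p_0)^2$, $u_k(p_0)=w(s_k(p_0),\th_k(p_0))$ as in the proof of Theorem~\ref{spec.thm}, so that $(s_k(p_0),\th_k(p_0))$ solves the pair \eqref{simul_zeros.eq} at $p_0$ and $(\la_k(p_0),u_k(p_0))\in P_k^+$. Since $J$ is exactly the Jacobian of $(s,\th)\mapsto(\Ga^-,\Ga^+)$, and \eqref{Gz_Jnz.eq} shows that $J\ne0$ at every solution of \eqref{simul_zeros.eq} (this sign argument, via Lemma~\ref{simple.lem}, being valid for all admissible coefficients, including every $\bfeta$ in the stated domain), the implicit function theorem applied to \eqref{simul_zeros.eq} with the full parameter $p$ produces a $C^1$ solution branch $p\mapsto(s(p),\th(p))$ on a neighbourhood of $p_0$ with $(s(p_0),\th(p_0))=(s_k(p_0),\th_k(p_0))$.

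The key step is then to identify this local branch with the globally-defined objects $\la_k,u_k$. Because $(s,\th)\mapsto w(s,\th)=\sin(s\,\cdot+\th)$ is continuous from $(0,\infty)\X S^1$ into $C^1[-1,1]$, the map $p\mapsto(s(p)^2,w(s(p),\th(p)))$ is continuous; since $(\la_k(p_0),u_k(p_0))\in P_k^+$ and the sets $P_k^\pm$ are open and pairwise disjoint, it follows that $(s(p)^2,w(s(p),\th(p)))\in P_k^+$ for all $p$ near $p_0$. By the uniqueness in Theorem~\ref{spec.thm} (there is exactly one eigenvalue whose eigenfunction lies in $P_k$), this forces $s(p)=s_k(p)$ and $\th(p)=\th_k(p)$, so $\la_k(p)=s(p)^2$ and $u_k(p)=w(s(p),\th(p))$ coincide with the local $C^1$ branch near $p_0$. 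Continuity of $\la_k$ at $p_0$, and of $u_k$ as a map into $C^0[-1,1]$ (a fortiori into $C^1[-1,1]$), is then immediate, and since $p_0$ was arbitrary this yields the corollary.

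I expect the main obstacle to be the bookkeeping in this identification rather than any analytic difficulty: one must ensure that the openness and disjointness of the $P_k^\pm$, together with the uniqueness statement of Theorem~\ref{spec.thm}, genuinely pin down the local implicit-function branch as the global $(s_k,\th_k)$, so that no label switching between different $k$ (nor between $P_k^+$ and $P_k^-$) can occur under small perturbations of the coefficients. Once this is granted the continuity follows formally, with no further estimates required.
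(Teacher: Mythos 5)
Your proposal is correct and is essentially the paper's own argument: the paper gives no separate proof, simply observing that the implicit function theorem construction of $(s_k,\th_k)$ in the proof of Theorem~\ref{spec.thm} (nonvanishing of the Jacobian $J$ at solutions of \eqref{simul_zeros.eq}, smoothness of $\Ga^\pm$ in all parameters, and the uniqueness of the eigenvalue with eigenfunction in $P_k$) yields continuous dependence on all the coefficients. Your additional care in identifying the local branch with the global $(\la_k,u_k)$ via the openness and disjointness of the sets $P_k^\pm$ is exactly the bookkeeping the paper leaves implicit.
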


\subsection{Positivity of the principal eigenfunction}
\label{positivity.sec}

In many applications it is important to know that the
{\em principal eigenfunction} $u_0$ is positive.
Thus we will now consider conditions which ensure this is true.

\begin{thm}  \label{posrinc_efun.thm}
Suppose that
\eqref{albe_nz.eq}-\eqref{AB_lin_cond.eq}
hold,
and $\al^\pm \ge 0$.
Then$:$\\
$(a)$\ $u_0 > 0$ on $(-1,1);$\\
$(b)$ if $\be_0^\nu \ne 0$, for some $\nu \in \{\pm\}$,
then $u_0(\nu) > 0$.
\end{thm}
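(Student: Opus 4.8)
The plan is to reduce everything to a single boundary estimate and then run a continuation argument in the coefficients. First note that in the Neumann-type case $\la_0 = 0$ and $u_0$ is a nonzero constant, so both claims are trivial; thus assume $\la_0 > 0$ and write $u_0 = w(s_0,\th_0)$ with $s_0 = \la_0^{1/2}$, as in \eqref{s_soln_form.eq}. For such a solution the modified Pr\"ufer angle is $\om_{(\la_0,u_0)}(x) = s_0 x + \th_0$ (up to an additive multiple of $\pi$), hence strictly increasing; and by Remark~\ref{oscillations_zero.rem}, $u_0$ vanishes exactly where $\om_{(\la_0,u_0)}(x) = 0\ (\mathrm{mod}\ \pi)$. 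Since $\om_{(\la_0,u_0)}(-1) \in [0,\pi)$ by \eqref{pruf_defn.eq}, positivity of $u_0$ on $(-1,1)$ is equivalent to $\om_{(\la_0,u_0)}(1) \le \pi$, and $u_0 \ge 0$ on $[-1,1]$ then follows by continuity. Normalising the eigenfunction so that $(\la_0,u_0) \in P_0^+$ (as in Theorem~\ref{spec.thm}) fixes the sign so that $u_0$ is the positive, rather than negative, principal eigenfunction.

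For part (a) I would argue by continuation along the segment $t \mapsto (t\bal,t\bbe)$, $t \in [0,1]$, which lies in $\B(\bal_0,\bbe_0)$ and satisfies $t\al^\pm \ge 0$ throughout. Writing $u_0^t$ for the corresponding principal eigenfunction, which depends continuously on $t$ by Corollary~\ref{cts_evals.cor}, set $T := \{t \in [0,1] : u_0^t > 0 \text{ on } (-1,1)\}$. Then $0 \in T$ by the separated theory of Theorem~\ref{spec_zero.thm}. The set $T$ is closed, since a limit $u_0^{t_*}$ satisfies $u_0^{t_*} \ge 0$ on $(-1,1)$, and an interior zero of $u_0^{t_*}$ would be a double zero, impossible for a nontrivial solution of \eqref{eval_de.eq}. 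To see that $T$ is also open, hence all of $[0,1]$, suppose positivity first fails as $t$ increases past some $t_* \in T$; then the interior zeros of $u_0^t$ appearing for $t$ slightly larger than $t_*$ must converge, as $t \downarrow t_*$, to a boundary zero of $u_0^{t_*}$, say $u_0^{t_*}(1) = 0$ with $(u_0^{t_*})'(1) < 0$ (the left endpoint being symmetric). It therefore suffices to rule out this boundary configuration.

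The heart of the matter is the following boundary estimate, which is where the hypothesis $\al^\pm \ge 0$ enters. Suppose $v = w(s,\th)$ satisfies $v \ge 0$ on $[-1,1]$, $v(1) = 0$ and $v'(1) < 0$, and satisfies \eqref{slbc.eq} at $+1$ for some $(\al^+,\be^+) \in \B(\al_0^+,\be_0^+)$ with $\al^+ \ge 0$. If $\be_0^+ > 0$, then the left-hand side of \eqref{slbc.eq} equals $\be_0^+ v'(1) = -s\be_0^+$, while on the right $\sum_i \al_i^+ v(\eta_i^+) \ge 0$ (as $\al_i^+ \ge 0$ and $v(\eta_i^+) \ge 0$) and $|\sum_i \be_i^+ v'(\eta_i^+)| \le s\sum_i |\be_i^+| < s\be_0^+$ by \eqref{AB_lin_cond.eq}; hence the right-hand side strictly exceeds the left, a contradiction. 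If instead $\be_0^+ = 0$, then \eqref{AB_lin_cond.eq} forces $\be^+ = 0$, and in this Dirichlet-type case the interior zero emerging for $t$ slightly beyond $t_*$ lies arbitrarily close to $x = 1$; since the finitely many points $\eta_i^+$ lie in $[-1,1)$, they all sit to its left for $t$ near $t_*$, so $v(\eta_i^+) \ge 0$, whence the right-hand side $\sum_i \al_i^+ v(\eta_i^+) \ge 0$ while the left-hand side $\al_0^+ v(1) < 0$ (as $\al_0^+ > 0$), again a contradiction. The case $\be_0^- < 0$ at the left endpoint is handled by the sign manipulation used for $\be_0 < 0$ in the proof of Lemma~\ref{single_bc_simple.lem}. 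This closes the continuation and proves (a).

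Finally, part (b) is an immediate consequence of the same estimate. Given (a) we already know $u_0 \ge 0$ on $[-1,1]$; if $\be_0^\nu \ne 0$ for some $\nu$ but $u_0(\nu) = 0$, then $\nu\, u_0'(\nu) > 0$ (a simple zero with $u_0 \ge 0$ nearby), and the boundary estimate applied at the end point $\nu$ yields a contradiction, so $u_0(\nu) > 0$. The main obstacle throughout is that the terms $\be_i^\pm u_0'(\eta_i^\pm)$ in \eqref{slbc.eq} carry no a priori sign; this is exactly what is controlled by the bound $\sum_i |\be_i^\pm| < \be_0^\pm$ from \eqref{AB_lin_cond.eq} in the threshold estimate, and by the localisation of the emerging interior zero near the end point in the Dirichlet-type estimate.
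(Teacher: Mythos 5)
Your overall strategy is the same as the paper's: continuation in $t \mapsto (t\bal,t\bbe)$ from the separated case $(\bzero,\bzero)$, combined with a boundary estimate at an endpoint zero that uses $\al^\pm \ge 0$, $u_0 \ge 0$, the fact that $|u_0'|$ attains its maximum at a zero of $u_0$, and the strict inequality $\sum_i|\be_i^\nu| < |\be_0^\nu|$ from \eqref{AB_lin_cond.eq}. Your estimate in the case $\be_0^\nu \ne 0$ is precisely the computation in the paper's proof. (There is a sign slip in your part (b): with $u_0 \ge 0$ and $u_0(\nu)=0$ one has $\nu\,u_0'(\nu)<0$, not $>0$; this does not affect the substance, since your estimate is stated for $v'(1)<0$, which is the correct configuration.)

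The one genuine weak point is your treatment of a Dirichlet-type endpoint, $\be_0^+ = 0$. You pass to $t$ slightly beyond $t_*$ and claim $u_0^t(\eta_i^+) \ge 0$ because the finitely many $\eta_i^+$ lie to the left of the interior zero emerging near $x=1$. But $\eta_i^+ = -1$ is permitted by the hypotheses ($\eta_i^+ \ne +1$ only), and if $\be_0^- = 0$ as well then $u_0^{t_*}$ may simultaneously vanish at $-1$, so $u_0^t(-1)$ can be negative for $t>t_*$. In that case the term $\al_i^+ u_0^t(-1)$ on the right of \eqref{slbc.eq} has the wrong sign, and your intended contradiction $0 > \al_0^+ u_0^t(1) = \sum_i \al_i^+ u_0^t(\eta_i^+) \ge 0$ fails without a quantitative comparison of $|u_0^t(1)|$ and $|u_0^t(-1)|$. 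The paper sidesteps this with a different device: it first proves the result when both $\be_0^\pm \ne 0$ (where the endpoint estimate applies directly to a nonnegative limiting eigenfunction), then handles $\be_0^\nu = 0$ by replacing $\be_0^\pm$ with $\be_0^\pm \pm 1/n$, obtaining eigenfunctions $u_{0,n}>0$ on all of $[-1,1]$, and passing to the limit via Corollary~\ref{cts_evals.cor}; the limit is $\ge 0$ on $[-1,1]$, cannot have an interior double zero, and part (b) follows from the same endpoint computation. You should either adopt that perturbation or supply an additional argument covering the case $\eta_i^+ = -1$.
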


\begin{proof}
By standard Sturm-Liouville theory the result is true when
$(\bal,\bbe) = (\bzero,\bzero)$
(part (a) is standard and (b) follows immediately since, under the
stated hypotheses,  $u_0(\nu) = 0 \Rightarrow u_0'(\nu) = 0$,
and an eigenfunction cannot have a double zero).
Now suppose that both $\be_0^\pm \ne 0$.
If the result fails then,
by using a limiting argument in the construction of the
eigenvalues by continuation from
$(\bal,\bbe) = (\bzero,\bzero)$
in the proof of Theorem~\ref{spec.thm},
we can show that there exists some
$(\bal,\bbe) \in \B(\bal_0,\bbe_0)$, with $\al^\pm \ge 0$,
such that the principal eigenfunction
$u_0(\bal,\bbe) \ge 0$ satisfies:
\begin{mylist}
\item[(1)]
$u_0(\bal,\bbe) > 0$ on $(-1,1)$\ \
(since $u_0(\bal,\bbe)$ cannot have a double zero);
\item[(2)]
$u_0(\bal,\bbe)(\nu) = 0$,
and hence
$|u_0'(\bal,\bbe)(\nu)| = |u_0'(\bal,\bbe)|_0$,
for some $\nu \in \{\pm\}$.
\end{mylist}
Now, by \eqref{slbc.eq}-\eqref{AB_lin_cond.eq}
\begin{align*}
0 &= \be_0^\nu u_0'(\bal,\bbe)(\nu ) -
\sum_{i=1}^{m^\nu}
\al_i^\nu u_0(\bal,\bbe)(\eta_i^\nu)
-
\sum_{i=1}^{m^\nu}
\be_i^\nu u_0'(\bal,\bbe) (\eta_i^\nu)
\\[1 ex]
& \le
 - |u_0'(\bal,\bbe)|_0
 \Big( |\be_0^\nu| - \sum_{i=1}^{m^\nu} |\be_i^\nu| \Big)
 < 0 ,
\end{align*}
and this contradiction shows that this case cannot occur.

Next, suppose that one, or both, of $\be_0^\pm=0$.
We replace the coefficients
$\be_0^\pm$ by $\be_0^\pm \pm 1/n$, $n = 1,2,\dots,$
and then let  $n \to \infty$.
By the result just proved, each of the corresponding principal
eigenfunctions, say $u_{0,n} \ge 0$, have the properties (a) and (b),
and so by Corollary~\ref{cts_evals.cor}
the limiting eigenfunction, say $u_{0,\infty} \ge 0$, satisfies (a),
and we can now prove that $u_{0,\infty}$ satisfies (b) by the same
calculation as before.
\end{proof}

\subsection {Algebraic multiplicity}
\label{alg_mult.sec}

Throughout this section we will suppose that
\eqref{al_pm_strict_pos.eq} holds so that,
by Theorem~\ref{De_inverse.thm}, $\De$ has an inverse
operator $\De^{-1} : Y \to X$
(see Remark~\ref{neu_comp_res.rem} below for some comments on the
Neumann-type case, when \eqref{al_pm_strict_pos.eq} does not hold).
We can also regard this inverse as an operator
$\De^{-1} : Y \to Y$,
which we will denote as $\De^{-1}_Y$.
Since $X$ is compactly embedded into $Y$,
$\De^{-1}_Y$ is compact
(indeed, this compactness together with the fact that $\De^{-1}_Y$ maps
$Y$ into itself is the motivation for introducing $\De^{-1}_Y$).
Now, the eigenvalue problem \eqref{mp_eval.eq} is equivalent to
the equation
\begin{equation}  \label{K_la_u.eq}
(I_Y + \la\De^{-1}_Y) u = 0,  \quad u \in Y ,
\end{equation}
where $I_Y$ denotes the identity on $Y$.
Hence, each eigenvalue $\la_k$, $k = 0,1,\dots,$
can be regarded as a characteristic value of $- \De^{-1}_Y$.
As usual, we  define the algebraic multiplicity of the
characteristic value $\la_k$ to be
$$\dim \bigcup_{j=1}^\infty N((I_Y + \la_k \De^{-1})^j)$$
(where $N$ denotes null-space).

\begin{lemma}  \label{alge_mult.lem}
For each $k \ge 0$ the algebraic multiplicity of the characteristic
value $\la_k$  of $-\De^{-1}_Y$ is equal to 1.
\end{lemma}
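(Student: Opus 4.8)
The plan is to show that the algebraic multiplicity equals the geometric multiplicity, which is already known to be $1$ by Corollary~\ref{geom_mult_one.cor}. Since we already know
$$
\dim N(I_Y + \la_k \De^{-1}_Y) = 1,
$$
it suffices to prove that the generalised eigenspace does not grow, i.e.\ that
$$
N\bigl((I_Y + \la_k \De^{-1}_Y)^2\bigr) = N(I_Y + \la_k \De^{-1}_Y).
$$
From this the equality of the full generalised nullspace with the ordinary nullspace follows by the standard theory of compact operators (the ascent stabilises once two consecutive nullspaces coincide). First I would fix $k$, write $\la := \la_k$ and $s := \la^{1/2}$, and let $u_k = w(s,\th_k)$ be the (essentially unique) eigenfunction, so that $(I_Y + \la\De^{-1}_Y) u_k = 0$.

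The heart of the argument is to show that the equation $(I_Y + \la\De^{-1}_Y) v = u_k$ has \emph{no} solution $v \in Y$; equivalently, that $u_k$ lies outside the range of $I_Y + \la\De^{-1}_Y$ on the eigenfunction direction, which rules out a nontrivial Jordan block. To do this I would translate the equation $(I_Y + \la\De^{-1}_Y)v = u_k$ back into a differential-equation statement: writing $v = \De^{-1}_Y g$ makes $v \in X$, and the equation becomes
$$
-v'' = \la v + c\, u_k
$$
for the appropriate constant, with $v$ required to satisfy the boundary conditions \eqref{slbc.eq}. This is a forced (resonant) second-order equation whose homogeneous solutions are spanned by $\sin(sx+\th_k)$ and $\cos(sx+\th_k)$, and whose particular solution is of the secular type $x\cos(sx+\th_k)$. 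The strategy is to substitute the general solution into the two boundary conditions \eqref{slbc.eq} and obtain a linear system; solvability of that system is governed by a Fredholm-type compatibility condition, and the claim is that this condition fails.

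The main obstacle is verifying that the solvability (compatibility) condition is violated, and this is exactly where the sign/nondegeneracy information built up earlier must be invoked. The key technical input is Lemma~\ref{bcerp.lem}, via its reformulation in Lemma~\ref{pruf_bnd.lem}, together with the Jacobian nonvanishing $J(s,\th,\bal,\bbe) \ne 0$ established in \eqref{Gz_Jnz.eq} during the proof of Theorem~\ref{spec.thm}. Concretely, differentiating the eigenvalue relations $\Ga^\pm(s,\th,\al^\pm,\be^\pm)=0$ with respect to $s$ (the spectral parameter) produces the same secular terms that appear from substituting $x\cos(sx+\th_k)$ into the boundary conditions, so the compatibility condition for solvability of the forced problem is expressible through the derivatives $\Ga^\pm_s$, $\Ga^\pm_\th$. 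The nonvanishing of the Jacobian $J$ then forces the compatibility condition to fail, so no $v$ exists and the ascent is $1$. I expect the bookkeeping that matches the coefficients of the secular term against the partial derivatives $\Ga^\pm_s,\Ga^\pm_\th$ to be the delicate part; once it is set up correctly, the conclusion is an immediate consequence of \eqref{Gz_Jnz.eq} and \eqref{nu_Ga_zero_pos.eq}.
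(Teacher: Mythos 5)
Your argument is correct, but it takes a genuinely different route from the paper's. The paper proves the lemma by continuation in $(\bal,\bbe)$ from the separated case $(\bzero,\bzero)$: using the continuous dependence of $\De^{-1}_Y$ and $\la_k$ on the coefficients, the fact (from Theorem~\ref{spec.thm}) that eigenvalues with different indices never meet as $(\bal,\bbe)$ varies over $\B(\bal_0,\bbe_0)$, and Kato's perturbation theory \cite{KAT}, it concludes that the algebraic multiplicity is constant on $\B(\bal_0,\bbe_0)$ and hence equal to its value $1$ in the classical separated problem. You instead compute the ascent directly, and the bookkeeping you defer does close up: any $v$ with $(I_Y+\la_k\De^{-1}_Y)v=u_k$ automatically lies in $X$ and solves $-v''=\la_k v+\la_k u_k$, whose general solution is $A\sin(sx+\th_k)+B\cos(sx+\th_k)+\tfrac{s}{2}\,x\cos(sx+\th_k)$ with $s=\la_k^{1/2}>0$; since $\cos(sx+\th_k)=\partial_\th w$ and $x\cos(sx+\th_k)=\partial_s w$, and the boundary forms commute with these parameter derivatives, imposing \eqref{slbc.eq} yields $B\,\Ga^\pm_\th+\tfrac{s}{2}\,\Ga^\pm_s=0$ (the $A$-terms drop out because $\Ga^\pm=0$ at the eigenvalue), a pair of equations in the single unknown $B$ whose inconsistency is precisely the statement $\tfrac{s}{2}J(s,\th_k,\bal,\bbe)\ne 0$, guaranteed by \eqref{Gz_Jnz.eq}. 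What each approach buys: yours is self-contained (no appeal to \cite{KAT} or to the continuity corollaries) and, notably, shows that the ``direct'' proof which the remark following the lemma describes as seeming difficult to extend to the general case does in fact extend, once the eigenfunctions are parametrised by $(s,\th)$ and the nondegeneracy is phrased as the Jacobian condition $J\ne 0$; the paper's route avoids the resonant-ODE computation entirely. One small correction: the input you actually need is \eqref{Gz_Jnz.eq} (equivalently Lemma~\ref{simple.lem}); Lemma~\ref{bcerp.lem} and Lemma~\ref{pruf_bnd.lem}, which you cite as key, play no role in this step.
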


\begin{proof}
The proof is again by continuation with respect to $(\bal,\bbe)$,
so we now write $\De^{-1}_Y(\bal,\bbe)$ and $\la_k(\bal,\bbe)$,
for $(\bal,\bbe) \in \B(\bal_0,\bbe_0)$.
When $(\bal,\bbe)= (0,0)$ it is easy to see
that the algebraic multiplicity of $\la_k(0,0)$ is equal to 1
(this case corresponds to the standard Sturm-Liouville problem).
Next, it was shown in
Corollaries~\ref{De_inverse_cts.cor)} and~\ref{cts_evals.cor} that
$\De^{-1}_Y(\bal,\bbe)$  and $\la_k(\bal,\bbe)$
depend continuously on $(\bal,\bbe)$,
and Theorem~\ref{spec.thm} shows that as $(\bal,\bbe)$ varies over
$\B(\bal_0,\bbe_0)$,
eigenvalues with different $k$ never meet.
Hence, by the results in \cite[Ch. 2, Sec. 5]{KAT},
the algebraic multiplicity of $\la_k(\bal,\bbe)$
is constant for $(\bal,\bbe) \in \B(\bal_0,\bbe_0)$
(the discussion in \cite[Ch. 2, Sec. 5]{KAT} is in finite dimensions
but,
as noted there, the results extend to bounded operators in infinite
dimensions).
This proves the result.
\end{proof}

\begin{remark}
In the case $\al^- = 0$, $\al^+>0$, $\be^\pm = 0$,
Lemma~\ref{alge_mult.lem} was proved directly in
\cite[Lemma~2.6]{XU} and \cite[Lemma~3.8]{RYN3}
(that is, without relying on perturbation theory for linear operators),
but it seems to be difficult to extend this proof to the general case.
This result was extended to
general Dirichlet-type and Neumann-type problems in
\cite{RYN5} and \cite{RYN6} respectively.
\end{remark}

\begin{remark}  \label{neu_comp_res.rem}
For simplicity we have excluded the Neumann-type case from this
section, since in this case the operator $\De$ does not have an inverse.
Of course, one could consider the operator $\De - \mu I_Y$, with
$\mu > 0$; it can be shown that this operator has an inverse, which is
compact (as a mapping into $Y$),
that is, $\De$ has compact resolvent.
We could then obtain similar results to those above.
However, this would entail considerable additional notational
complexity, and the Neumann-type case was treated in detail in
\cite{RYN6}, so we will simply omit this case here.
\end{remark}

\subsection{Counter examples}  \label{eval_counterexamples.sec}

In this section we will show that Theorem~\ref{spec.thm}
need not be true if \eqref{AB_lin_cond.eq} does not hold, and that the
condition \eqref{AB_lin_cond.eq} is, in some sense, optimal for the
validity of Theorem~\ref{spec.thm}.
In fact, for the Dirichlet-type problem,
it was shown in \cite[Examples 3.5, 3.6]{RYN3} that if
$\sum_{i=1}^{m^\pm} |\al_i^\pm| = \al_0^\pm$
then we may have an eigenvalue/eigenfunction pair
$(\la,u) \in \pa P_k$, for some $k$
(in the present notation)
while if
$\sum_{i=1}^{m^\pm} |\al_i^\pm| > \al_0^\pm$
then we may have $\si_k = \emptyset$ for a finite, but arbitrarily
large, set of integers $k$,
that is, the corresponding eigenvalues $\la_k$ may be `missing' from the
sequence of eigenvalues constructed in Theorem~\ref{spec.thm}.
Similar examples were constructed for the Neumann-type case in
\cite[Examples 4.17, 4.18]{RYN6}.
These examples show that condition \eqref{AB_lin_cond.eq} is optimal
in the cases where one or other of the fractions on
the left hand side of \eqref{AB_lin_cond.eq} is absent.
Thus it seems of interest to also show that \eqref{AB_lin_cond.eq} is
optimal when both fractions are present.
The following example will do this when these fractions are nonzero
and equal to each other.
More precisely, in this case we will show that if the number $1$ on
the right hand side of \eqref{AB_lin_cond.eq} is increased
by an arbitrarily small amount then Theorem~\ref{spec.thm} need not
hold, and arbitrarily many eigenvalues may be `missing'.

For notational simplicity we will consider the problem on the interval
$(0,1)$, with a standard Dirichlet condition at $x=0$, and the following
multi-point condition at $x=1$
\begin{equation}  \label{c_ex_bc.eq}
\al_0 u(1) + \be_0 u'(1)
=
\al_1 u(\eta_1) + \be_2 u'(\eta_2) .
\end{equation}
For any eigenvalue $\la = s^2 > 0$ the
corresponding eigenfunction must have the form $C \sin s x$, $C \in \R$.
Hence, defining $\Ga : \R \to \R$ by
$$
\Ga(s) :=
\al_0 \sin s + s \be_0 \cos s
 -
\al_1 \sin s\eta_1 + s \be_2 \cos s \eta_2 ,
\quad s \in \R ,
$$
it is clear that $\la = s^2$ is an eigenvalue iff $\Ga(s) = 0$,
and also, for any integer $k \ge 0$,
$\la \in \si_k \implies \la \in [(k-2)\pi,(k+2)\pi]$.

To construct our counter example we will show that with a suitable
choice of the coefficients in the  boundary condition
\eqref{c_ex_bc.eq} there exists a `long' interval  $I$ such that if $s
\in I$ then $\Ga(s) \ne 0$,
that is, $s^2$ cannot be an eigenvalue.
This will show that $\si_k = \emptyset$ for a range of values of $k$.

Choose a `large' integer $k_0$
(we will be more specific below), and set:
$$
\ep = \frac{10}{k_0}, \quad  s(\ga) = (1+\ga\ep) k_0 \pi,
\quad \ga \in [-1,1] .
$$
Hence, as $\ga$ varies over the interval $[-1,1]$,
the number $s(\ga)$ varies over the interval
$$I_{k_0} := [(k_0-10)\pi,(k_0+10)\pi]. $$
We also set:
\begin{alignat*}{2}
\al_0 &= 1,  &  \be_0 &= \frac{1}{k_0\pi},
\\[1 ex]
\al_1 &= \frac{1+\ep}{\sqrt{2}} ,
\qquad
&  \be_2 &= \frac{1}{k_0\pi} \frac{1+\ep}{\sqrt{2}} ,
\\[1 ex]
\eta_1  &= \frac{1}{2k_0} , &  \eta_2 &=  \frac{1}{k_0}.
\end{alignat*}
Simple estimates now show that if $\ep$ is sufficiently small
(that is, if $k_0$ is sufficiently large)
then, for $\ga \in[-1,1]$,
\begin{align*}
\Ga(s(\ga)) & \le
\sqrt{2} + \ep - \frac{1+\ep}{\sqrt{2}}
\Big( \sin \frac{\pi}{2}(1+\ep) - (1+\ep) \cos \pi(1+\ep)  \Big)
\\
& \le
\sqrt{2} + \ep - \sqrt{2}(1+\ep)(1-\ep/14)
\\
& <
\ep \big( 1 - \frac{13\sqrt{2}}{14} + {\rm O}(\ep) \big)
\\
& <
0 .
\end{align*}
This shows that there is no eigenvalue $\la = s^2$ with
$s \in I_{k_0}$,
that is, $\si_{k} = \emptyset$ if $k \in [k_0 - 7,k_0 + 7]$.
Clearly, there is nothing special about the number 10 in this example,
so in fact we could construct an example for which $\si_k = \emptyset$
for an arbitrarily long succession of integers $k$.
Also, since
$$
\frac{\al_1}{\al_0}  = \frac{\be_2}{\be_0}  = \frac{1+\ep}{\sqrt{2}} ,
$$
and $\ep$ is arbitrarily small,
we see that if the number $1$ in condition
\eqref{AB_lin_cond.eq}
is increased by an arbitrarily small amount then Theorem~\ref{spec.thm}
need not hold.

\end{document}